\crefname{assumption}{assumption}{assumptions}
\Crefname{figure}{Figure}{Figures}
\crefname{equation}{}{}
\Crefname{equation}{Equation}{Equations}
\setlist[enumerate,2]{label=(\alph*),ref=\theenumi(\alph*)}
\setlist[enumerate,3]{label=\roman*.,ref=\theenumii\roman*}
\DeclarePairedDelimiterXPP\set[1]{}{\{}{\}}{}{

#1}
\DeclarePairedDelimiter\abs{|}{|}
\DeclarePairedDelimiter\N{\|}{\|}
\def\ol{\overline}
\def\ul{\underline}
\def\wtd{\widetilde}
\def\what{\widehat}
\def\ii{\mathrm{i}}
\DeclareMathOperator{\diag}{diag}
\DeclareMathOperator{\rank}{rank}
\DeclareMathOperator{\subspan}{span}
\DeclareMathOperator{\trace}{trace}
\DeclareMathOperator{\F}{F}
\DeclareMathOperator{\T}{T}
\newtheorem{theorem}{Theorem}[section]
\newtheorem{lemma}{Lemma}[section]
\theoremstyle{definition}
\newtheorem{example}{Example}[section]
\numberwithin{equation}{section}
\numberwithin{figure}{section}
\numberwithin{table}{section}
\newcommand\clue[2]{\stackrel{\makebox[0pt][c]{\scriptsize #1}}{#2}\;}
\DeclareMathOperator{\nnz}{nnz}
\newcommand\bs[1]{\boldsymbol{ #1}}
\newcommand\op[1]{\mathscr{ #1}}
\title{Flexible fixed-point iteration and its applications for nonsymmetric algebraic Riccati equations}
\author{
Zhen-Chen Guo\thanks{School of Mathematical Sciences, Nankai University, Tianjin 300071, China; \texttt{e-mail: guozhenchen@nankai.edu.cn}. 
Supported in part by NSFC-12571403 and NSFC-12371380.
} \and 
Xin Liang\thanks{
	Beijing Institute of Mathematical Sciences and Applications, Beijing 101408, China;
\texttt{e-mail: liangxin@bimsa.cn}.
Supported in part by NSFC-12371380.
} 
}
\date{\today}
\begin{document}
\maketitle

\begin{abstract}
	In this paper, we reveal the intrinsic Toeplitz structure in the unique stabilizing solution for nonsymmetric algebraic Riccati equations by employing a shift-involved fixed-point iteration,
	and propose an RADI-type method for computing this solution for large-scale equations of this type with sparse and low-rank structure by incorporating flexible shifts into the fixed-point iteration.
	We present a shift-selection strategy, termed Leja shifts, based on rational approximation theory, which is incorporated into the RADI-type method.
	We further discuss important implementation aspects for the method, such as low-rank factorization of residuals, implicit update of large-scale sparse matrices, real arithmetics with complex shifts, and related equations of other type.
	Numerical experiments demonstrate the efficiency of both the proposed method and the introduced shift-selection strategy.
\end{abstract}

\smallskip
{\bf Key words.} 
algebraic Riccati equations, fixed-point iteration, Toeplitz structure, RADI.

\smallskip
{\bf AMS subject classifications}. 
65F45, 15A24, 49N10, 93B52

\section{Introduction}\label{sec:introduction}

Consider the nonsymmetric algebraic Riccati equation (NARE)
\begin{equation}\label{eq:mare}
	\op R(X):=XCX-XD-AX+B=0,
\end{equation}
where the coefficients are given by $A\in \mathbb{R}^{m\times m}, D\in \mathbb{R}^{n\times n}$, 
$B = L^B R^B, C= L^C R^C$ with $L^B\in \mathbb{R}^{m\times p}, R^B\in \mathbb{R}^{p\times n},
L^C\in \mathbb{R}^{n\times q}, R^C\in \mathbb{R}^{q\times m}$, 
and $X\in \mathbb{R}^{m\times n}$ is the unknown. 
If $L^B=-(R^B)^{\T},L^C=(R^C)^{\T}, A=D^{\T}$, then \cref{eq:mare} reduces a continuous-time algebraic Riccati equation (CARE), which arises widely 
in control theory and whose unique positive semi-definite stabilizing solution is concerned.
The terminology \emph{nonsymmetric} here is historically but improperly used to denote the equation is in its \emph{general} form where no symmetry is imposed on the coefficients.

An important special case of NAREs~\cref{eq:mare} is an M-matrix algebraic Riccati equation (MARE), which requires the additional assumption that 
$\mathcal{M} = \begin{bmatrix} D & -C \\ -B & A \end{bmatrix}$ is an M-matrix. 
MAREs play an important role in Markov-modulated fluid queue models \cite{rogers1994fluid,beanOS2010stochastic,latoucheT2009stochastic,foreestMS2003analysis} as well as numerical solutions of transport equations \cite{bellmanW1975introduction,ganapol1992investigation,juang1995existence}.
Generic NAREs \cref{eq:mare}, which do not rely on the M-matrix assumption, also arise in dynamic game theory, such as Nash games and Stackelberg games \cite{aboukandilB1985analytical,freilingJA1996global,freilingJA1999discrete,kremer2003nonsymmetric,aboukandilFIJ2003matrix}.
In this paper, we focus on general NAREs \cref{eq:mare} rather than MAREs, addressing both the closed form of the desired solution and its numerical method. 

As with many other types of algebraic Riccati equations, the NARE \cref{eq:mare} admits infinite many solutions, but in many applications, including those mentioned above, the solution of interest is the unique stabilizing solution.
For the MARE, this unique stabilizing solution coincides with  the so-called minimal nonnegative solution that has componentwise minimal entries among all nonnegative solutions. 
Substantial progress has been made on both the theoretical and numerical aspects of MAREs, see, e.g., \cite{biniIM2012numerical,huangLL2018structurepreserving}.
By contrast, algorithms for computing the unique stabilizing solution of general NAREs \cref{eq:mare} are far less developed than those for CAREs and MAREs. 
Existing approaches include generalizations of methods originally designed for CAREs,
such as structure-preserving doubling algorithm (SDA) and its variants, including 
alternating-directional doubling algorithm (ADDA), and Newton-ADI methods \cite{biniIM2012numerical,huangLL2018structurepreserving,bennerKS2016lowrank}. 

Taking inspiration from the theoretical results and numerical algorithms developed in our previous work \cite{guoL2023intrinsic} for discrete-time algebraic Riccati equations (DAREs) and CAREs,
we investigate the intrinsic  structure existing in the unique stabilizing solution for general NAREs \cref{eq:mare}.
Specifically, we employ a shift-involved fixed-point iteration to reveal this structure and propose an RADI-type algorithm with a low-rank formulation of the residual of an approximation, achieved by incorporating flexible shifts into the fixed-point iteration.
In detail, we present
1) the Toeplitz-structured closed form of the stabilizing solution of NAREs;
and
2) a nonsymmetric version of the RADI method, which demonstrates advantages in solving large-scale problems.

The rest of the paper is organized as follows. We first introduce some notation. 
Based on the generalized Cayley transformation, the Toeplitz-structured closed form of the unique stabilizing solution for NAREs is derived in \cref{sec:the-closed-form-of-the-minimal-nonnegative-solution-to-mares},
where we also provide structured approximations, including an acceleration using many different shifts in the generalized Cayley transformation.    
In view of the incorporation technique, the RADI-type method is presented in \cref{sec:incorporation-and-radi}, 
where we derive the low-rank factorization of the residual and discuss some implementation issues, including how to keep both $A$ and $D$ unchanged and how to avoid complex arithmetics for real matrices.   
Several numerical results demonstrating the effectiveness of the proposed RADI type approach are reported in \cref{sec:experiments-and-discussions}. Concluding remarks are given in \cref{sec:concluding-remarks}. Finally, some supporting tedious proofs are provided in the \cref{sec:tedious-proofs}.  

{\bf Notation.}
In the paper, $I_n$, $0_{m\times n}$ and $\bs 1_n$ (or simply $I$, $0$ and $\bs 1$ if the dimension is clear from the
context) respectively are the $n\times n$ identity matrix,  $m\times n$ matrix with each entry zero and $n$-dimensional column vector with all elements $1$.   
Given a vector or matrix $X$, $X^{\T},\N{X},\N{X}_{\F}$ are its transpose, spectral norm, and Frobenius norm respectively, while $\lambda(X), \rho(X)$ are the spectra and the spectral radius of the matrix $X$ respectively.
By $M \otimes N$  denote the Kronecker product of the matrices $M$ and $N$.

Some easy identities are given:
\begin{equation}\label{eq:easy}
	U(I+V^{\T}U)=(I+U^{\T}V)U,\qquad
	U(I+V^{\T}U)^{-1}=(I+U^{\T}V)^{-1}U.
\end{equation}
Here is the Sherman-Morrison-Woodbury formula (SMWF): 
\begin{equation}\label{eq:smwf}
	(M + UDV^{\T})^{-1} = M^{-1} - M^{-1} U (D^{-1} + V^{\T} M^{-1} U)^{-1} V^{\T} M^{-1}.
\end{equation}
The inverse sign in \cref{eq:easy,eq:smwf} indicates its feasibility. Both will be applied occasionally.

\section{The closed form of the stabilizing solution to NAREs}\label{sec:the-closed-form-of-the-minimal-nonnegative-solution-to-mares}

Typically, solutions to the NARE \cref{eq:mare} are expressed in terms of the (graph) invariant subspace of the associated Hamiltonian-like matrix
\begin{equation}\label{eq:hamiltonian}
	\mathcal{H}=\begin{bmatrix}
	D & -C \\ B & -A
\end{bmatrix}.
\end{equation}
For any solution $X$ to the NARE, it holds that 
\begin{equation}\label{eq:Hami}
	\mathcal{H}\begin{bmatrix}
			I \\ X
		\end{bmatrix}=\begin{bmatrix}
			D & -C \\ B & -A
		\end{bmatrix}\begin{bmatrix}
			I \\ X
		\end{bmatrix}=\begin{bmatrix}
			I \\ X
		\end{bmatrix}(D-CX).
\end{equation}
\Cref{eq:Hami} is also referred to as linearization of the NARE \cref{eq:mare} and $\mathcal{H}$ is called  its linearizing matrix. 
Indeed, $X$ is a solution of the NARE \cref{eq:mare} if and only if \cref{eq:Hami} holds. 

Among all of the solutions, the desired stabilizing solution corresponds to the stabilizing invariant subspace,  named after the requirement that the matrix $D-CX$ is stable,  i.e., $\lambda(D-CX)\subset \mathbb{C}_-$,
where $\mathbb{C}_-$ is the open left  half complex plane.  
According to \cite[Theorem~2.5]{biniIM2012numerical}, 
if $\mathcal{H}$ has $m$ eigenvalues in $\mathbb{C}\setminus\mathbb{C}_-$ and $n$ eigenvalues in $\mathbb{C}_-$,
and if \cref{eq:Hami} holds with $\lambda(D-CX)\subset \mathbb{C}_-$, then the NARE \cref{eq:mare} admits a unique stabilizing solution $X$. 
This statement can be extended to the weakly stabilizing case.
Besides, it is natural to associate the NARE \cref{eq:mare} with the \emph{dual} NARE: $YBY-YA-DY+C=0$, according to 
\[
	\mathcal{H}\begin{bmatrix}
		I & Y \\ X & I
		\end{bmatrix}=\begin{bmatrix}
			D & -C \\ B & -A
		\end{bmatrix}\begin{bmatrix}
		I & Y \\ X & I
		\end{bmatrix}=\begin{bmatrix}
		I & Y \\ X & I
		\end{bmatrix}\begin{bmatrix}
		D-CX & \\ & -(A-BY)
		\end{bmatrix},
\]
where $X,Y$ are the solutions to \cref{eq:mare} and its dual, respectively.

The Popov approach, which establishes existence and uniqueness of solutions for CAREs and DAREs, can also be generalized to the NARE \cref{eq:mare}; see \cite[Chapter~9]{aboukandilFIJ2003matrix}.  
There, one finds an equivalent condition: the NARE \cref{eq:mare} has a unique stabilizing solution if and only if the associated Toeplitz operator has a bounded inverse, provided that $(D,L^C)$ and $(A^{\T},(R^C)^{\T})$ are stabilizable.  
Analogously to the Popov approach for CAREs \cite[Corollary~4.2.2]{ionescuOW1999generalized}, we obtain the following sufficient condition for existence and uniqueness of the stabilizing solution to the NARE \cref{eq:mare}:
\begin{equation}\label{eq:assumption:regular-M}
	\fbox{$(D,L^C),(A^{\T},(R^C)^{\T}),(D^{\T},(R^B)^{\T}),(A,L^B)$ are stabilizable.}
\end{equation}
Here, a matrix pair $(M\in \mathbb{R}^{r\times r},N\in \mathbb{R}^{r\times r'})$ is called \emph{stabilizable} if
$\rank\left(\begin{bmatrix} M-\lambda I & N \end{bmatrix}\right) = r$
	for all $\lambda \in \mathbb{C}\setminus \mathbb{C}_-$
\cite[Definition~A.1.4]{aboukandilFIJ2003matrix}.  
Under assumption \cref{eq:assumption:regular-M}, both Toeplitz operators associated with the NARE \cref{eq:mare} and its dual possess bounded inverses.  
We will adopt \cref{eq:assumption:regular-M} throughout the remainder of this paper.

\subsection{The generalized Cayley transformation on NAREs}\label{ssec:nare-and-adda}
For CAREs, applying the Cayley transformation to the associated Hamiltonian matrix yields an equivalent DARE.  
Analogously, the generalized Cayley transformation
\begin{equation}\label{eq:generalized-Cayley}
	\op C^{\alpha,\beta}(M)=(\alpha I+M)^{-1}(\beta I-M),
\end{equation}
when applied to the Hamiltonian-like matrix $\mathcal{H}$ \cref{eq:hamiltonian}, transforms the NARE \cref{eq:mare} into an equivalent nonsymmetric DARE (NDARE).  
Beyond its theoretical significance, this transformation will prove crucial in the development of effective numerical algorithms.

Temporarily disregarding invertibility issues, applying the generalized Cayley transformation to $\mathcal{H}$ yields
\[
	\op C^{\alpha,\beta}(\mathcal{H})\begin{bmatrix}
		I \\ X
		\end{bmatrix}=(\alpha I+\mathcal{H})^{-1} (\beta I-\mathcal{H})\begin{bmatrix}
		I \\ X
	\end{bmatrix}=\begin{bmatrix}
		I \\ X
		\end{bmatrix}(\alpha I+[D-CX])^{-1} (\beta I-[D-CX])=\begin{bmatrix}
		I \\ X
	\end{bmatrix}\op C^{\alpha,\beta}(D-CX),
\]
or equivalently, with $R=-\op C^{\alpha,\beta}(D-CX)$,
\begin{equation}\label{eq:general-Caylay:tmp1}
	\begin{bmatrix}
			D_{-\beta} & -C\\
			B & -A_\beta
		\end{bmatrix}\begin{bmatrix}
			I \\ X
			\end{bmatrix}=-(\beta I-\mathcal{H})\begin{bmatrix}
			I \\ X
			\end{bmatrix}=(\alpha I+\mathcal{H})\begin{bmatrix}
			I \\ X
		\end{bmatrix}R=\begin{bmatrix}
			D_\alpha & -C\\
			B & -A_{-\alpha}
		\end{bmatrix}\begin{bmatrix}
			I \\ X
		\end{bmatrix}R,
\end{equation}
where $A_{\tau}=\tau I+A$, $D_{\tau}=\tau I+D$ for any $\tau=\pm\alpha,\pm\beta$.

Introducing the 
block elementary transformation
\begin{equation}\label{eq:R}
	\mathcal{T}=\begin{bmatrix}
		I & D_{\alpha}^{-1}C
		\\
		0 & I
	\end{bmatrix}
	\begin{bmatrix}
		I & \\ & -(A_{\beta}- BD_{\alpha}^{-1}C)^{-1}
	\end{bmatrix}
	\begin{bmatrix}
		D_{\alpha}^{-1} & 0 
		\\
		-BD_{\alpha}^{-1} & I
	\end{bmatrix}
	,
\end{equation}
the matrix pair $(\mathcal{H}-\beta I, \mathcal{H}+\alpha I)$ can be simplified to
\begin{equation}\label{eq:trans-for-ssf1}
	\mathcal{T}\begin{bmatrix}
		D_{-\beta} & -C\\
		B & -A_\beta
	\end{bmatrix}
	=\begin{bmatrix}
		-E_0 & 0 \\ -H_0 & I
	\end{bmatrix},
	\qquad 
	\mathcal{T}\begin{bmatrix}
		D_\alpha & -C\\
		B & -A_{-\alpha}
	\end{bmatrix}
	=\begin{bmatrix}
		I & -G_0 
		\\
		0 & -F_0
	\end{bmatrix},
\end{equation}
with 
\begin{equation}\label{eq:initial:mare}
	\begin{aligned}
		F_0&= F(\alpha,\beta):=-(A_\beta  - B D_\alpha^{-1} C)^{-1}(A_{-\alpha}  - B D_\alpha^{-1} C),
		\\
		E_0&= E(\alpha,\beta):=-(D_\alpha - C A_\beta^{-1}B)^{-1}(D_{-\beta} - C A_\beta^{-1}B), 	
		\\
		H_0&= H(\alpha,\beta):=(\alpha+\beta)(A_\beta - B D_\alpha^{-1} C)^{-1}BD_\alpha^{-1}, \qquad 
		\\
		G_0&= G(\alpha,\beta):=(\alpha+\beta)(D_\alpha - C A_\beta^{-1}B)^{-1} CA_\beta^{-1}.
	\end{aligned}
\end{equation}
Plugging \cref{eq:trans-for-ssf1} into \cref{eq:general-Caylay:tmp1} yields
\begin{equation}\label{eq:trans-for-ssf1-EFGH}
	\begin{bmatrix}
		-{E}_0 & 0 \\ -H_0 & I
	\end{bmatrix}
	\begin{bmatrix}
		I \\ X
	\end{bmatrix}
	=
	\begin{bmatrix}
		I & -G_0 \\ 0 & -{F}_0
		\end{bmatrix}\begin{bmatrix}
		I \\ X
	\end{bmatrix}R,
\end{equation}
which is equivalent to
\begin{equation}\label{eq:trans-for-ssf1-EFGH-inverse}
	\begin{aligned}
				-E_0&=(I-G_0X)R,\\
				X-H_0&=-F_0XR,
	\end{aligned}
\end{equation}
and can ultimately be reformulated as the following NDARE: 
\begin{equation}\label{eq:ndare}
	X-H_0=F_0X(I-G_0X)^{-1}E_0.
\end{equation}

The matrices whose invertibility must be considered include $\alpha I+\mathcal{H},\alpha I+[D-CX], D_\alpha,A_\beta, A_\beta-BD_\alpha^{-1}C,D_\alpha-CA_\beta^{-1}B, I-G_0X$.
Assuming that $A_{\beta}, D_{\alpha}$ are nonsingular, we have:
\begin{enumerate}
	\item $\alpha I+\mathcal{H}$ is nonsingular 
		$\Leftrightarrow$ $A_{-\alpha}-BD_\alpha^{-1}C$ is nonsingular;
	\item $\alpha I+\mathcal{H}$ is nonsingular $\Rightarrow$ $\alpha I+[D-CX]$ is nonsingular;
	\item $A_\beta-BD_\alpha^{-1}C$ is nonsingular $\clue{\cref{eq:smwf}}{\Leftrightarrow}$ $D_\alpha-CA_\beta^{-1}B$ is nonsingular;
	\item $I-G_0X$ is nonsingular $\Leftarrow$ $E_0$ is nonsingular $\Leftrightarrow$ both $D_{-\beta}-CA_\beta^{-1}B$ and $D_{\alpha}-CA_{\beta}^{-1}B$ are nonsingular.
\end{enumerate}
As a result, it suffices to choose $\alpha,\beta$ to ensure 
\begin{equation}\label{eq:assumption:alpha-beta:inv}
	\fbox{
$D_\alpha,A_\beta, A_\beta-BD_\alpha^{-1}C, A_{-\alpha}-BD_\alpha^{-1}C,D_{-\beta}-CA_\beta^{-1}B$ are nonsingular.
}
\end{equation}




\subsection{The fixed-point iteration}\label{ssec:the-fixed-point-iteration}

For DAREs and CAREs, \cite{guoL2023intrinsic} established a framework based on a fixed-point iteration to analyze the structure of their stabilizing solutions and derive their closed forms. 
We apply the same technique here to discuss the NARE \cref{eq:mare} and its unique stabilizing solution $X_{\star}$, and further reveal that $X_{\star}$ admits a Toeplitz-structured closed form.

First, we show that the NDARE \cref{eq:ndare} is equivalent to the NARE \cref{eq:mare}, as in the following lemma.
\begin{lemma}\label{lm:equiv:MARE}
	Given $A,B,C,D,\alpha,\beta$ satisfying \cref{eq:assumption:alpha-beta:inv}, and $E_0,F_0,G_0,H_0$ defined in \cref{eq:initial:mare}, the NARE \cref{eq:mare} and the NDARE \cref{eq:ndare} share the same solution set.
\end{lemma}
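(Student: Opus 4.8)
The plan is to verify that the chain of reformulations already carried out in deriving the NDARE, namely \cref{eq:mare}~$\Leftrightarrow$~\cref{eq:Hami}~$\Leftrightarrow$~\cref{eq:general-Caylay:tmp1}~$\Leftrightarrow$~\cref{eq:trans-for-ssf1-EFGH}~$\Leftrightarrow$~\cref{eq:trans-for-ssf1-EFGH-inverse}~$\Leftrightarrow$~\cref{eq:ndare}, is reversible step by step under \cref{eq:assumption:alpha-beta:inv} (together with $\alpha+\beta\neq0$, which is tacit in the use of the generalized Cayley transformation and should be read into \cref{eq:assumption:alpha-beta:inv}: if $\alpha+\beta=0$ then $\op C^{\alpha,\beta}(\mathcal H)$ is constant, every $X$ solves the NDARE, and the claim fails). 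The first equivalence is the remark recorded just below \cref{eq:Hami}; the passage \cref{eq:general-Caylay:tmp1}~$\Leftrightarrow$~\cref{eq:trans-for-ssf1-EFGH} is left multiplication by the matrix $\mathcal T$ of \cref{eq:R}, a product of factors that are invertible exactly under \cref{eq:assumption:alpha-beta:inv}, combined with \cref{eq:trans-for-ssf1}; and \cref{eq:trans-for-ssf1-EFGH}~$\Leftrightarrow$~\cref{eq:trans-for-ssf1-EFGH-inverse}~$\Leftrightarrow$~\cref{eq:ndare} is elementary block-matrix algebra once $I-G_0X$ is known to be invertible. So the real content lies in the link between \cref{eq:Hami} and \cref{eq:general-Caylay:tmp1} — the Cayley transform and the shift $R$ — together with producing the invertibilities that are not assumed a priori.

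For \cref{eq:mare}~$\Rightarrow$~\cref{eq:ndare}, starting from a solution $X$ of \cref{eq:mare} I would first show that $M_\alpha:=\alpha I+D-CX$ is nonsingular. Indeed \cref{eq:Hami} gives $(\mathcal H+\alpha I)\left[\begin{smallmatrix}I\\X\end{smallmatrix}\right]=\left[\begin{smallmatrix}I\\X\end{smallmatrix}\right]M_\alpha$, and $\mathcal H+\alpha I$ is nonsingular by the first of the four itemized implications above (since $A_{-\alpha}-BD_\alpha^{-1}C$ is nonsingular), so a short kernel argument — or the second itemized implication — forces $M_\alpha$ nonsingular. Putting $R:=-\op C^{\alpha,\beta}(D-CX)=M_\alpha^{-1}M_{-\beta}$, one has $I-R=(\alpha+\beta)M_\alpha^{-1}$ and $(\beta I+\alpha R)M_\alpha=(\alpha+\beta)(D-CX)$, which makes the rearranged identity $\mathcal H\left[\begin{smallmatrix}I\\X\end{smallmatrix}\right](I-R)=\left[\begin{smallmatrix}I\\X\end{smallmatrix}\right](\beta I+\alpha R)$ — an equivalent form of \cref{eq:general-Caylay:tmp1} — equivalent to \cref{eq:Hami}. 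Multiplying by $\mathcal T$ yields \cref{eq:trans-for-ssf1-EFGH}, whose two block rows are \cref{eq:trans-for-ssf1-EFGH-inverse}; finally, $-E_0=(I-G_0X)R$ and the nonsingularity of $E_0$ under \cref{eq:assumption:alpha-beta:inv} (the fourth itemized implication, using the third via \cref{eq:smwf}) force $I-G_0X$ nonsingular, so \cref{eq:trans-for-ssf1-EFGH-inverse} rearranges into \cref{eq:ndare}.

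For the converse \cref{eq:ndare}~$\Rightarrow$~\cref{eq:mare}, given a solution $X$ of \cref{eq:ndare} — hence with $I-G_0X$ nonsingular — I would put $R:=-(I-G_0X)^{-1}E_0$; then \cref{eq:trans-for-ssf1-EFGH-inverse}, then \cref{eq:trans-for-ssf1-EFGH}, then (after applying $\mathcal T^{-1}$) \cref{eq:general-Caylay:tmp1} hold. Reading the first block row of \cref{eq:general-Caylay:tmp1} gives $M_{-\beta}=M_\alpha R$, i.e.\ $M_\alpha(I-R)=(\alpha+\beta)I$ after using $M_{-\beta}=M_\alpha-(\alpha+\beta)I$, so $\alpha+\beta\neq0$ forces both $M_\alpha$ and $I-R$ nonsingular. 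Combining the two block rows of the rearranged form of \cref{eq:general-Caylay:tmp1} then gives $\bigl(B-AX-X(D-CX)\bigr)(I-R)=0$, and cancelling the invertible factor $I-R$ leaves $\op R(X)=XCX-XD-AX+B=0$, so $X$ solves \cref{eq:mare}.

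I expect the only non-routine point to be the nonsingularity of $M_\alpha$ — equivalently of $I-R$ — which is part of neither hypothesis and must be extracted: in the forward direction from the invertibility of $\mathcal H+\alpha I$, which is exactly where \cref{eq:assumption:alpha-beta:inv} does its work, and in the backward direction from the purely algebraic identity $M_\alpha(I-R)=(\alpha+\beta)I$ read off the linearization. The remainder is bookkeeping: matching each of the invertibilities of $\mathcal T$, of $E_0$, of $I-G_0X$, and of $M_\alpha$ to the corresponding itemized implication above, all of which \cref{eq:assumption:alpha-beta:inv} guarantees.
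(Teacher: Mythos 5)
Your proposal is correct and takes essentially the same route as the paper: both directions reverse the chain \cref{eq:mare}--\cref{eq:ndare} via the itemized invertibility consequences of \cref{eq:assumption:alpha-beta:inv}, with the key step being to extract the nonsingularity of $\alpha I+D-CX$ (equivalently of $I-R$) from the first block row of \cref{eq:general-Caylay:tmp1}; your final cancellation $\op R(X)(I-R)=0$ is just a cosmetic variant of the paper's identification $R=-\op C^{\alpha,\beta}(D-CX)$ followed by \cref{eq:Hami}. Your remark that $\alpha+\beta\neq0$ must be read into the hypotheses is a fair minor refinement that the paper leaves implicit.
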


\begin{proof}
As in the discussions in \cref{ssec:nare-and-adda}, the assumptions \cref{eq:assumption:alpha-beta:inv} guarantee that any solution to the NARE \cref{eq:mare} has to be a solution to the NDARE \cref{eq:ndare}.

To the opposite, 
for any solution $X$ to the NDARE \cref{eq:ndare},
letting $R=-(I-G_0X)^{-1}E_0$, \cref{eq:trans-for-ssf1-EFGH-inverse} holds, which is followed sequentially by
\cref{eq:trans-for-ssf1-EFGH} and \cref{eq:general-Caylay:tmp1}.
Noticing $A_{-\alpha}-BD_\alpha^{-1}C=-(\mathcal{H}+\alpha I)/D_\alpha, D_{-\beta} - C A_\beta^{-1}B=(\mathcal{H}-\beta I)/(-A_\beta)$,
where $N/M$ denotes the Schur complement of $M$ in $N$,
the invertibility of them ensures the invertibility of $\mathcal{H}+\alpha I,$ and $\mathcal{H}-\beta I$, 
so \cref{eq:general-Caylay:tmp1} gives $\op C^{\alpha,\beta}(\mathcal{H})\begin{bmatrix}
	I\\ X
\end{bmatrix}=\begin{bmatrix}
	I\\ X
\end{bmatrix}R$. 
Then by calculating the first block in both sides of \cref{eq:general-Caylay:tmp1} we have 
$[D-CX]-\beta I=([D-CX]+\alpha I)R$, which is equivalent to 
$(I-D_{\alpha}^{-1}CX)R = I - (\alpha+\beta)D_{\alpha}^{-1}-D_{\alpha}^{-1}CX$.
Thus $(I-D_{\alpha}^{-1}CX)(R - I) = -(\alpha+\beta)D_{\alpha}^{-1}$, showing that $I-D_{\alpha}^{-1}CX$ is nonsingular, or equivalently $D_{\alpha}-CX$ is nonsingular.  
Hence it holds that  $R=-\op C^{\alpha,\beta}(D-CX)$, 
and then \cref{eq:Hami} holds, which concludes that $X$ is a solution to the NARE \cref{eq:mare}.
\end{proof}

Based on the NDARE \cref{eq:ndare}, it is natural to consider the following fixed-point iteration: 
\begin{equation}\label{eq:fixedpoint:mare}
	X_0=0, \qquad	X_1=H_0, \qquad X_{t+1}=\op D(X_t):= H_0 + F_0 X_{t}
	(I-G_0 X_{t})^{-1}E_0,
\end{equation}
where $E_0,F_0,G_0,H_0$ are defined as in \cref{eq:initial:mare}.

If the fixed-point iteration \cref{eq:fixedpoint:mare} does not break down, or equivalently it holds that 
$I-G_0 X_{t}$ remains nonsingular for all $t$,
then it is easy to see that the limit of the sequence $\set{X_t}$, if exists, is a solution to both the NDARE \cref{eq:ndare} and the NARE \cref{eq:mare}. 
A natural question is:
	does the generated sequence $\set{X_t}$ converge to $X_\star$, the stabilizing solution to the NARE \cref{eq:mare}?

Like the discussions in \cref{ssec:nare-and-adda}, it is easy to observe that the fixed-point iteration \cref{eq:fixedpoint:mare} is equivalent to 
\[
	\begin{bmatrix}
		-{E}_0 & 0 \\ -H_0 & I
	\end{bmatrix}
	\begin{bmatrix}
		I \\ X_{t+1}
	\end{bmatrix}
	=
	\begin{bmatrix}
		I & -G_0 \\ 0 & -{F}_0
		\end{bmatrix}\begin{bmatrix}
		I \\ X_t
	\end{bmatrix}R_t
	\quad\Leftrightarrow \quad
	\begin{bmatrix}
		I \\ X_{t+1}
	\end{bmatrix}
	=
	-\op C^{\alpha,\beta}(\mathcal{H})^{-1}\begin{bmatrix}
		I \\ X_t
	\end{bmatrix}R_t,
\]
where $
R_t=-(I-G_0X_t)^{-1}E_0$.
Such an observation tells that the fixed-point iteration \cref{eq:fixedpoint:mare} is essentially the power iteration applied to $-\op C^{\alpha,\beta}(\mathcal{H})^{-1}$ with the initial $\begin{bmatrix}
	I \\ X_0
\end{bmatrix}=\begin{bmatrix}
	I \\ 0
\end{bmatrix}$.
Hence $R_t$ serves as a normalization factor.
Then as long as 
\begin{equation}\label{eq:assumption:valid-inv}
	\fbox{$\abs{\op C^{\alpha,\beta}(\lambda_i)^{-1}}>\abs{\op C^{\alpha,\beta}(\lambda_j)^{-1}}$ for $\lambda_i\in \lambda(\mathcal{H})\cap\mathbb{C}_-,\lambda_j\in \lambda(\mathcal{H})\setminus\mathbb{C}_-$,}	
\end{equation}
the power iteration converges to the stabilizing invariant subspace, or equivalently, $X_t\to X_\star$, the unique stabilizing solution to the NARE \cref{eq:mare}.
When the sequence converges, all $R_t$ have to be nonsingular, so do $I-G_0X_t$.

\begin{theorem}\label{thm:fixedpoint:mare}
	Assume that \cref{eq:assumption:valid-inv} holds. Then:
	\begin{enumerate}
		\item The sequence $\set{X_t}$ generated by the fixed-point iteration \cref{eq:fixedpoint:mare} satisfies 
		$X_t\to X_\star$, the unique stabilizing solution to the NARE \cref{eq:mare}.
		\item The error admits the explicit form 
		\[
		X_t= X_\star - (I-X_tY_\star)S^tX_\star R^t,
		\]
		where $R=-\op C^{\alpha,\beta}(D-CX_\star)$, $S=-\op C^{\beta,\alpha}(A-BY_\star)$, and $Y_\star$ is the unique stabilizing solution to the dual equation. This implies linear convergence with asymptotic rate
		$\lim\limits_{t\to\infty}\left( \frac{\N{X_t-X_\star}}{\N{X_\star}} \right)^{1/t}\le \rho(R)\rho(S)$, provided $\rho(R)\rho(S)<1$.
	\end{enumerate}
\end{theorem}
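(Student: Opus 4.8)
The plan is to turn the power-iteration picture set up just before the statement into a quantitative one by simultaneously block-diagonalizing $\mathcal H$ with the primal and dual stabilizing solutions. I would first bring in $Y_\star$, the (unique, under \cref{eq:assumption:regular-M}) stabilizing solution of the dual equation, so that $\lambda(A-BY_\star)\subset\mathbb C_-$. The block identity displayed right after \cref{eq:Hami} then reads $\mathcal H P=P\Lambda$ with $P=\begin{bmatrix}I&Y_\star\\X_\star&I\end{bmatrix}$ and $\Lambda=\diag(D-CX_\star,\,-(A-BY_\star))$; since $\lambda(D-CX_\star)\subset\mathbb C_-$ and $\lambda(-(A-BY_\star))\subset\mathbb C\setminus\mathbb C_-$ are disjoint, the two $\mathcal H$-invariant column spaces of $P$ are complementary, so $P$ is nonsingular, and in particular $\Delta_1:=I-X_\star Y_\star$ and $\Delta_2:=I-Y_\star X_\star$ are nonsingular. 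Applying $\zeta\mapsto-\op C^{\alpha,\beta}(\zeta)^{-1}$ to $\mathcal H=P\Lambda P^{-1}$ and checking that $-\op C^{\alpha,\beta}(D-CX_\star)^{-1}=R^{-1}$ and $-\op C^{\alpha,\beta}(-(A-BY_\star))^{-1}=-\op C^{\beta,\alpha}(A-BY_\star)=S$ (both one-line manipulations with the scalar transform) gives $\Phi:=-\op C^{\alpha,\beta}(\mathcal H)^{-1}=P\diag(R^{-1},S)P^{-1}$.

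For part 1 I would just invoke convergence of subspace iteration. The display before the statement gives $\begin{bmatrix}I\\X_t\end{bmatrix}=\Phi^{t}e_1\,\mathcal{R}_t$ with $e_1=\begin{bmatrix}I\\0\end{bmatrix}$ and $\mathcal{R}_t=R_0\cdots R_{t-1}$ the accumulated normalizers, while $P^{-1}e_1=\begin{bmatrix}\Delta_2^{-1}\\-\Delta_1^{-1}X_\star\end{bmatrix}$ has invertible first component, i.e.\ $e_1$ is in generic position relative to the $R^{-1}$-block. Condition \cref{eq:assumption:valid-inv} says precisely that every eigenvalue of the first block $R^{-1}$ strictly exceeds in modulus every eigenvalue of $S$, so subspace iteration converges: the column space of $\begin{bmatrix}I\\X_t\end{bmatrix}$ tends to that of $\begin{bmatrix}I\\X_\star\end{bmatrix}$, and since the iterates are normalized to identity first block, $X_t\to X_\star$.

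For part 2 the computation I expect is
\[
\Phi^{t}e_1=P\diag(R^{-t},S^{t})P^{-1}e_1=\begin{bmatrix}R^{-t}\Delta_2^{-1}-Y_\star S^{t}\Delta_1^{-1}X_\star\\ X_\star R^{-t}\Delta_2^{-1}-S^{t}\Delta_1^{-1}X_\star\end{bmatrix},
\]
which, via the elementary push-through $X_\star\Delta_2=\Delta_1 X_\star$ — equivalently $S^{t}\Delta_1^{-1}X_\star=(S^{t}X_\star R^{t})R^{-t}\Delta_2^{-1}$ — factors as $\Phi^{t}e_1=\begin{bmatrix}I-Y_\star S^{t}X_\star R^{t}\\ X_\star-S^{t}X_\star R^{t}\end{bmatrix}R^{-t}\Delta_2^{-1}$. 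Reading off the first block gives $\mathcal{R}_t^{-1}=(I-Y_\star S^{t}X_\star R^{t})R^{-t}\Delta_2^{-1}$; inserting this into the second block and cancelling $R^{-t}\Delta_2^{-1}$ yields $X_t=(X_\star-S^{t}X_\star R^{t})(I-Y_\star S^{t}X_\star R^{t})^{-1}$, which rearranges to the claimed implicit identity $X_t=X_\star-(I-X_tY_\star)S^{t}X_\star R^{t}$.

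Finally, for the rate I would note that the eigenvalue correspondence above gives $\rho(R)=\max_i|\op C^{\alpha,\beta}(\lambda_i)|$ and $\rho(S)=1/\min_j|\op C^{\alpha,\beta}(\lambda_j)|$ over $\lambda_i\in\lambda(\mathcal H)\cap\mathbb C_-$, $\lambda_j\in\lambda(\mathcal H)\setminus\mathbb C_-$, so \cref{eq:assumption:valid-inv} already forces $\rho(R)\rho(S)<1$; then from $X_t-X_\star=-(I-X_tY_\star)S^{t}X_\star R^{t}$, the facts $S^{t}X_\star R^{t}\to0$ (whence $\sup_t\N{I-X_tY_\star}<\infty$), $\N{S^{t}}^{1/t}\to\rho(S)$, and $\N{R^{t}}^{1/t}\to\rho(R)$ give $\limsup_{t}\N{X_t-X_\star}^{1/t}\le\rho(R)\rho(S)$, hence the stated asymptotic bound after dividing by $\N{X_\star}$. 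The part that needs care is the noncommutative bookkeeping in part 2 — the normalizer $\mathcal{R}_t$ and the factor $R^{-t}\Delta_2^{-1}$ must stay on the right throughout, and it is exactly the push-through $X_\star\Delta_2=\Delta_1X_\star$ that produces the clean factorization; a secondary point is that all of this presupposes the iteration does not break down ($I-G_0X_t$, equivalently the first block of $\Phi^{t}e_1$, nonsingular for every $t$), which the argument secures for all large $t$ and which for the remaining finitely many $t$ is part of the hypothesis that the sequence is generated.
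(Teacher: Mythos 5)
Your argument is correct, and part 1 is essentially the paper's own reasoning (power/subspace iteration applied to $-\op C^{\alpha,\beta}(\mathcal H)^{-1}$, with the useful extra detail that the first block of $P^{-1}\begin{bmatrix}I\\0\end{bmatrix}$, namely $(I-Y_\star X_\star)^{-1}$, certifies the generic-position requirement). For part 2, however, you take a genuinely different route. The paper never diagonalizes: it works at the level of the fixed-point map, computing $\Delta_{t+1}=X_\star-X_{t+1}=-F_0(I-X_tG_0)^{-1}\Delta_t R$ with $R=-(I-G_0X_\star)^{-1}E_0$, then proving the key identity $F_0(I-X_tG_0)^{-1}=-(I-X_{t+1}Y_\star)S(I-X_tY_\star)^{-1}$ with $S=-(I-H_0Y_\star)^{-1}F_0$, so that induction gives $(I-X_tY_\star)^{-1}\Delta_t=S^t X_\star R^t$ together with the nonsingularity of $I-X_tY_\star$ at every step; the Gel'fand formula then yields the rate. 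You instead block-diagonalize $\Phi=-\op C^{\alpha,\beta}(\mathcal H)^{-1}=P\diag(R^{-1},S)P^{-1}$ with $P=\begin{bmatrix}I&Y_\star\\X_\star&I\end{bmatrix}$, compute $\Phi^t\begin{bmatrix}I\\0\end{bmatrix}$ explicitly, and eliminate the accumulated normalizer through the first block; the push-through $X_\star(I-Y_\star X_\star)=(I-X_\star Y_\star)X_\star$ does the same job as the paper's commutation identity. What your route buys: a non-recursive closed form for the whole trajectory straight from the linearization, a cleaner reason why the eigenvalues of $R$ and $S$ are exactly the Cayley images of the two spectral halves of $\mathcal H$, and the observation (sharper than the paper's ``provided'' clause) that \cref{eq:assumption:valid-inv} already forces $\rho(R)\rho(S)<1$ since both spectra are finite. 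What the paper's route buys: it stays entirely within the $E_0,F_0,G_0,H_0$ algebra, never needs $R$ (hence $\op C^{\alpha,\beta}$ at the stable eigenvalues) to be invertible, and it proves the nonsingularity of $I-X_tY_\star$ and of the iteration's normalizers as a by-product of the induction rather than assuming the iteration is well defined; your version needs $\beta\notin\lambda(D-CX_\star)$ (implicit in \cref{eq:assumption:valid-inv} being well posed) and, as you note yourself, treats no-breakdown as part of the hypothesis, which matches the paper's own level of care on that point.
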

\begin{proof}
	Item~1 has been shown above. Here only deal with Item~2.
	We have known $R=-(I-G_0X_\star)^{-1}E_0$, and similarly $S=-(I-H_0Y_\star)^{-1}F_0$.
For any $t$, write $\Delta_t=X_\star -X_t$, and then
\begin{align*}
	\Delta_{t+1}
	&=\op D(X_\star)-\op D(X_t)
	\\&\clue{\cref{eq:fixedpoint:mare}}{=}F_0X_\star(I-G_0X_\star)^{-1}E_0 -F_0X_t(I-G_0X_t)^{-1}E_0
	\\&=F_0(X_\star-X_t)(I-G_0X_\star)^{-1}E_0 +F_0X_t(I-G_0X_t)^{-1}\left([I-G_0X_t]-[I-G_0X_\star]\right)(I-G_0X_\star)^{-1}E_0
	\\&=F_0\Delta_t(I-G_0X_\star)^{-1}E_0 +F_0X_t(I-G_0X_t)^{-1}G_0\Delta_t(I-G_0X_\star)^{-1}E_0
	\\&=-F_0[I +X_t(I-G_0X_t)^{-1}G_0]\Delta_tR
	\\&\clue{\cref{eq:easy}}{=}-F_0(I-X_tG_0)^{-1}\Delta_tR
	.
\end{align*}
If $I-X_tY_\star$ is nonsingular, then
\begin{align*}
	F_0(I-X_tG_0)^{-1}
	&= F_0(I-X_tG_0)^{-1}\left(I-X_t\left[G_0+E_0Y_\star(I-H_0Y_\star)^{-1}F_0\right]\right)(I-X_tY_\star)^{-1}
	\\&= F_0\left(I-(I-X_tG_0)^{-1}X_tE_0Y_\star(I-H_0Y_\star)^{-1}F_0\right)(I-X_tY_\star)^{-1}
	\\&\clue{\cref{eq:easy}}{=} F_0\left(I-X_t(I-G_0X_t)^{-1}E_0Y_\star(I-H_0Y_\star)^{-1}F_0\right)(I-X_tY_\star)^{-1}
	\\&=\left(I-H_0Y_\star-F_0X_t(I-G_0X_t)^{-1}E_0Y_\star\right)(I-H_0Y_\star)^{-1}F_0(I-X_tY_\star)^{-1}
	\\&\clue{\cref{eq:fixedpoint:mare}}{=}-(I-X_{t+1}Y_\star)S(I-X_tY_\star)^{-1}
	,
\end{align*}
which implies $I-X_{t+1}Y_\star$ is nonsingular, and 
\begin{align*}
	(I-X_{t+1}Y_\star)^{-1}\Delta_{t+1}=S(I-X_tY_\star)^{-1}\Delta_tR.
\end{align*}
By induction, $I-X_tY_{\star}$ are nonsingular for all $t$, and $(I-X_tY_\star)^{-1}\Delta_t= S^t(I-X_0Y_\star)^{-1}\Delta_0R^t$ or equivalently $X_\star-X_t= (I-X_tY_\star) S^tX_\star R^t$, which gives the equality.
Then by the Gel'fand Formula,
$\lim_{t\to\infty}\left( \frac{\N{X_t-X_\star}}{\N{X_\star}} \right)^{1/t}\le \N{R^t}^{1/t}\N{S^t}^{1/t}=\rho(R)\rho(S)<1$.
\end{proof}

\subsection{The Toeplitz-structured closed form of the stabilizing solution}\label{ssec:the-toeplitz-structure-in-the-stabilizing-solution}

Recall that in the very beginning we assume that $B$ and $C$ have factorizations:
\begin{equation}\label{eq:BC=LR}
		B = L^B R^B,\qquad L^B\in \mathbb{R}^{m\times p},R^B\in \mathbb{R}^{p\times n},
		\qquad
		C= L^C R^C,\qquad L^C\in \mathbb{R}^{n\times q},R^C\in \mathbb{R}^{q\times m}.
\end{equation}
Though the factorizations are not essential theoretically, in practice they would be very useful to cut down the computational cost.
Besides,
in many applications, such as examples from transport theory and stochastic fluid flow, $B$ and $C$ are usually of low rank and given in their rank factorizations. 
With such factorizations, any $X_t$ in the sequence generated by the fixed-point iteration \cref{eq:fixedpoint:mare} enjoys a Toeplitz structure as revealed in \cref{thm:fixedpoint:mare-toeplitz} below,
which implies the intrinsic Toeplitz structure for the fixed-point $X_{\star}$,
analogous to the cases of DAREs and CAREs \cite{guoL2023intrinsic}. 
This also suggests FFT-based Toeplitz-structured approximations exploiting low displacement rank.

To derive the Toeplitz structure in $X_t$ and $X_{\star}$, first some new terms are introduced:
	\begin{equation}\label{eq:ADBC-new}
		\begin{aligned}
			\wtd A = -A_{\beta}^{-1} A_{-\alpha}=\op C^{\beta,\alpha}(A), \quad L_B = A_\beta^{-1}L^B, \quad R_C &= (\alpha+\beta) R^CA_\beta^{-1},\qquad Y^A= R^C A_\beta^{-1}L^B\in \mathbb{C}^{q\times p}, \\
			\wtd D = -D_{\alpha}^{-1} D_{-\beta}=\op C^{\alpha,\beta}(D), \quad L_C = D_\alpha^{-1} L^C, \quad R_B &=(\alpha+\beta)R^BD_\alpha^{-1}, \qquad Y^D= R^B D_\alpha^{-1} L^C\in \mathbb{C}^{p\times q}. 
		\end{aligned}
	\end{equation}
	Write $ T^A_1=Y^A, T^D_1=Y^D$ and
	\begin{subequations}\label{eq:noniter:X:mare}
		\begin{align}
			U^D_t&=\begin{bmatrix}
				\wtd D^{t-1} L_C&\cdots &\wtd D^2 L_C &\wtd D L_C& L_C
			\end{bmatrix}
			_{n\times tq},
			&V^A_t &= \begin{bmatrix}
				 R_C\wtd A^{t-1} \\ \vdots \\  R_C\wtd A^2\\  R_C\wtd A \\  R_C
			 \end{bmatrix}
			 _{tq\times m}
			,\\
			U^A_t&=\begin{bmatrix}
				 L_B&\wtd A L_B&\wtd A^2 L_B &\cdots &\wtd A^{t-1} L_B
			 \end{bmatrix}
			 _{m\times tp},
			&V^D_t &= \begin{bmatrix}
				 R_B \\  R_B\wtd D \\  R_B\wtd D^2\\ \vdots \\  R_B\wtd D^{t-1}
			 \end{bmatrix}
			 _{tp\times n}
			,\\
			T^A_t &= 
				\begin{bmatrix}
				Y^A & R_C L_B & R_C\wtd A L_B  & \cdots  & R_C\wtd A^{t-2} L_B \\
                    & Y^A     & R_C L_B        & \ddots  & \vdots              \\
                    &         & Y^A            & \ddots  & R_C\wtd A L_B       \\
                    &         &                & \ddots  & R_C L_B             \\
                    &         &                &         & Y^A                 \\
			\end{bmatrix}_{tq\times tp}
			\hspace*{-23pt}
			,
		&T^D_t &= 
				\begin{bmatrix}
				Y^D                  &         &               &         &     \\
				 R_B L_C             & Y^D     &               &         &     \\
				 R_B\wtd D L_C       & R_B L_C & Y^D           &         &     \\
				\vdots               & \ddots  & \ddots        & \ddots  &     \\
				 R_B\wtd D^{t-2} L_C & \cdots  & R_B\wtd D L_C & R_B L_C & Y^D \\
			 \end{bmatrix}_{tp\times tq}
			\hspace*{-23pt}
				.
		\end{align}
	\end{subequations}
\begin{theorem}\label{thm:fixedpoint:mare-toeplitz}
	Suppose that \cref{eq:assumption:alpha-beta:inv,eq:assumption:valid-inv} hold. 
	The terms of the sequence $\set{X_t}$ generated by \cref{eq:fixedpoint:mare} are
	\begin{equation}\label{eq:noniter:X:X:mare}
		X_t	= U^A_t(I-T^D_tT^A_t)^{-1}V^D_t, \qquad t=1,2,\dots,
	\end{equation}
	in which $I-T^D_tT^A_t$ is nonsingular,
	where $U^A_t,V^D_t,T^A_t,T^D_t$ are defined in \cref{eq:ADBC-new,eq:noniter:X:mare}.

	As a result of Item~1 of \cref{thm:fixedpoint:mare} and \cref{eq:noniter:X:X:mare},
the unique stabilizing solution $X_{\star}$ has a Toeplitz-structured closed form:
\[
	X_{\star}=\op U^A(I-\op T^D\op T^A)^{-1}\op V^D,
\]
where 
\begin{align*}
	\op U^A&=\begin{bmatrix}
		 L_B & \wtd A L_B & \wtd A^2 L_B & \cdots     
	\end{bmatrix}, \quad 
	&\op V^D &=\begin{bmatrix}
		 R_B  \\  R_B\wtd D   \\  R_B\wtd D^2\\ \vdots 
	\end{bmatrix} 
	,\\
	\op T^D &=\begin{bmatrix}
		Y^D \\
		 R_B L_C   & Y^D\\
		 R_B\wtd D  L_C & R_B L_C   & Y^D\\
		 R_B\wtd D^2  L_C & R_B\wtd D  L_C & R_B L_C   & Y^D\\
		\vdots & \ddots & \ddots & \ddots & \ddots 
	\end{bmatrix},
	\quad
	&\op T^A &=\begin{bmatrix}
                 Y^A    & R_C L_B & R_C\wtd A L_B & R_C\wtd A^2 L_B &\cdots\\
                        & Y^A     & R_C L_B       & R_C\wtd A L_B   &\ddots\\
                        &         & Y^A           & R_C L_B         &\ddots\\
                        &         &               & Y^A             &\ddots\\
						&&&&\ddots \\
	\end{bmatrix}.
\end{align*}
\end{theorem}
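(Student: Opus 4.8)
The overall strategy is to prove \cref{eq:noniter:X:X:mare} by induction on $t$ and then take $t\to\infty$. The preparatory step is to rewrite the coefficient matrices $E_0,F_0,G_0,H_0$ of \cref{eq:initial:mare} in a ``shift-plus-low-rank'' form adapted to the factorizations \cref{eq:BC=LR}. Put $M^D=(I-Y^DY^A)^{-1}$ and $M^A=(I-Y^AY^D)^{-1}$; these exist because, by \cref{eq:smwf}, the nonsingularity of $D_\alpha-CA_\beta^{-1}B=D_\alpha(I-D_\alpha^{-1}L^CY^AR^B)$ required in \cref{eq:assumption:alpha-beta:inv} forces $I-Y^DY^A$, hence $I-Y^AY^D$, to be nonsingular. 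Using $A_{-\alpha}=A_\beta-(\alpha+\beta)I$, $D_{-\beta}=D_\alpha-(\alpha+\beta)I$, one application of \cref{eq:smwf} (with $D=-I$) to $(A_\beta-BD_\alpha^{-1}C)^{-1}$ and $(D_\alpha-CA_\beta^{-1}B)^{-1}$, together with the push-through identities \cref{eq:easy}, yields the compact forms
\begin{align*}
	H_0&=L_BM^DR_B, & F_0&=\wtd A+L_BM^DY^DR_C, \\
	G_0&=L_CM^AR_C, & E_0&=\wtd D+L_CM^AY^AR_B,
\end{align*}
with all quantities as in \cref{eq:ADBC-new} (recall $\wtd A=-A_\beta^{-1}A_{-\alpha}$, $\wtd D=-D_\alpha^{-1}D_{-\beta}$). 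This is a routine computation.

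I would next record the two-block recursions for the structured factors in \cref{eq:noniter:X:mare}, all immediate from their Toeplitz/Krylov patterns:
\begin{align*}
	U^A_{t+1}&=\begin{bmatrix} L_B & \wtd A\,U^A_t\end{bmatrix}, & V^D_{t+1}&=\begin{bmatrix} R_B \\ V^D_t\wtd D\end{bmatrix}, \\
	T^A_{t+1}&=\begin{bmatrix} Y^A & R_CU^A_t \\ 0 & T^A_t\end{bmatrix}, & T^D_{t+1}&=\begin{bmatrix} Y^D & 0 \\ V^D_tL_C & T^D_t\end{bmatrix}.
\end{align*}
The base case $t=1$ is immediate: $X_1=H_0=L_BM^DR_B=U^A_1(I-T^D_1T^A_1)^{-1}V^D_1$, since $I-T^D_1T^A_1=I-Y^DY^A=(M^D)^{-1}$.

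For the inductive step, assume $X_t=U^A_tZ_tV^D_t$ with $Z_t:=(I-T^D_tT^A_t)^{-1}$ nonsingular, and write $W^A_t:=R_CU^A_t$, $W^D_t:=V^D_tL_C$ — precisely the off-diagonal blocks appearing in $T^A_{t+1}$ and $T^D_{t+1}$. Since the hypothesis \cref{eq:assumption:valid-inv} gives $X_t\to X_\star$ by Item~1 of \cref{thm:fixedpoint:mare}, every $I-G_0X_t$ is nonsingular; applying \cref{eq:smwf} and \cref{eq:easy} to $(I-G_0X_t)^{-1}=(I-L_CM^AW^A_tZ_tV^D_t)^{-1}$ and simplifying with $Z_t^{-1}=I-T^D_tT^A_t$ collapses the product to
\[
	X_t(I-G_0X_t)^{-1}=U^A_t\wtd Z_tV^D_t, \qquad \wtd Z_t:=\bigl(I-T^D_tT^A_t-W^D_tM^AW^A_t\bigr)^{-1},
\]
with $\wtd Z_t$ nonsingular; I would prove this as a standalone lemma. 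Now $F_0U^A_t=\wtd A\,U^A_t+L_BM^DY^DW^A_t$ and, symmetrically, $V^D_tE_0=V^D_t\wtd D+W^D_tM^AY^AR_B$, so expanding $X_{t+1}=\op D(X_t)=H_0+F_0X_t(I-G_0X_t)^{-1}E_0$ produces five terms. On the other side, the block recursions present $I-T^D_{t+1}T^A_{t+1}$ as a $2\times2$ block matrix whose $(1,1)$ block is $(M^D)^{-1}$ and whose Schur complement with respect to it is exactly $\wtd Z_t^{-1}$ — here one uses $I+Y^AM^DY^D=M^A$ — so the block-inverse formula writes $U^A_{t+1}(I-T^D_{t+1}T^A_{t+1})^{-1}V^D_{t+1}$ as the same five terms, the matching of the two cross terms requiring only the identity $M^AY^A=Y^AM^D$ from \cref{eq:easy}. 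This argument also shows $I-T^D_{t+1}T^A_{t+1}$ is nonsingular, so the induction closes.

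Finally, letting $t\to\infty$ in \cref{eq:noniter:X:X:mare} and using $X_t\to X_\star$ (Item~1 of \cref{thm:fixedpoint:mare}) identifies $X_\star$ with the displayed Toeplitz-structured form, the semi-infinite expression $\op U^A(I-\op T^D\op T^A)^{-1}\op V^D$ being read as the limit of its truncations $U^A_t(I-T^D_tT^A_t)^{-1}V^D_t$. The step I expect to be the main obstacle is the inductive step: one must perform the nested Woodbury/push-through reductions in exactly the order that makes the expressions telescope, and then match two five-term expansions. The algebra is bulky but is driven entirely by three elementary identities — $I+Y^AM^DY^D=M^A$, $I+Y^DM^AY^A=M^D$ and $M^AY^A=Y^AM^D$, all consequences of \cref{eq:easy,eq:smwf} — and isolating the auxiliary identity $X_t(I-G_0X_t)^{-1}=U^A_t\wtd Z_tV^D_t$ beforehand reduces the step to a single $2\times2$ block-inverse computation.
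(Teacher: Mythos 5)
Your proposal is correct and follows essentially the same route as the paper: the same low-rank rewriting of $E_0,F_0,G_0,H_0$ via \cref{eq:ADBC-new}, the same induction with the same middle inverse $\bigl(I-T^D_tT^A_t-V^D_tG_0U^A_t\bigr)^{-1}$, and the same appeal to \cref{thm:fixedpoint:mare} for the limit. The only difference is bookkeeping — you match two five-term expansions via the explicit $2\times 2$ block inverse and its Schur complement, whereas the paper absorbs the cross terms by factoring out block-triangular matrices — and these are algebraically equivalent, resting on the same identities $I+Y^AM^DY^D=M^A$ and $M^AY^A=Y^AM^D$.
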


\begin{proof}
	First we rewrite $E_0,F_0,G_0,H_0$ by the new introduced terms in \cref{eq:ADBC-new}.
	Note that $A_\beta-BD_\alpha^{-1}C$ is nonsingular,
	which implies $I-T_1^DT_1^A=I-Y^D Y^A =I-R^BD_\alpha^{-1}CA_\beta^{-1}L^B$ is nonsingular and
		$(I-Y^D Y^A)^{-1}
		\clue{\cref{eq:smwf}}{=}I+R^BD_\alpha^{-1}C(A_\beta-BD_\alpha^{-1}C)^{-1}L^B
		$.
	Plugging \cref{eq:BC=LR} into \cref{eq:initial:mare},
	\begin{subequations}\label{eq:EFGH0:low-rank}
	\begin{equation} \label{eq:H0:low-rank}
		\begin{aligned}[b]
			H_0&=(\alpha+\beta)A_\beta^{-1}(I - L^BR^B D_\alpha^{-1}L^CR^CA_\beta^{-1})^{-1} L^B R^B D_{\alpha}^{-1}
			\\&\clue{\cref{eq:easy}}{=}
			(\alpha+\beta)A_\beta^{-1}L^B(I - R^B D_\alpha^{-1}L^CR^CA_\beta^{-1}L^B)^{-1}  R^B D_{\alpha}^{-1}
			\\&\clue{\cref{eq:ADBC-new}}{=}
			 L_B(I - Y^D Y^A)^{-1} R_B
			=U_1^A(I-T_1^D T_1^A)^{-1} V_1^D,
		\end{aligned}
	\end{equation}
	which is \cref{eq:noniter:X:X:mare} at $t=1$.
	Similarly, we have
		\begin{align}
			E_0&=\wtd D+U_1^D(I-Y^A Y^D)^{-1}Y^A V_1^D, \label{eq:E0:low-rank}
			\\
			F_0&=\wtd A+U_1^A(I-Y^D Y^A)^{-1}Y^D V_1^A,  \label{eq:F0:low-rank} 
			\\
			G_0&=U_1^D(I-Y^AY^D)^{-1}V_1^A. \label{eq:G0:low-rank}
		\end{align}
	\end{subequations}
	Also,
	\begin{subequations}\label{eq:E0F0:matrix-multiply}
	\begin{align}
		E_0&=\begin{bmatrix}
			U_1^D(I-Y^A Y^D)^{-1}Y^A&I
		\end{bmatrix}\begin{bmatrix}
			V_1^D \\ \wtd D
		\end{bmatrix},
		\\
		F_0&=\begin{bmatrix}
			U_1^A & \wtd A
		\end{bmatrix}\begin{bmatrix}
			(I-Y^D Y^A)^{-1}Y^D V_1^A \\ I
		\end{bmatrix}.
	\end{align}
	\end{subequations}

	Then suppose that $I-T^D_tT^A_t$ is nonsingular and \cref{eq:noniter:X:X:mare} holds for $t$, and we are going to show them for $t+1$.
	By the fixed-point iteration \cref{eq:fixedpoint:mare},
	\begin{align*}
		X_{t+1}
		& \, = \, H_0+F_0 U^A_t(I-T^D_tT^A_t)^{-1}V^D_t \left(I-G_0 U^A_t(I-T^D_tT^A_t)^{-1}V^D_t\right)^{-1} E_0
		\\&\, \clue{\cref{eq:easy}}{=}
	H_0+F_0 U^A_t(I-T^D_tT^A_t)^{-1} \left(I-V^D_tG_0 U^A_t(I-T^D_tT^A_t)^{-1}\right)^{-1}V^D_t E_0
			\\&\, \clue{\cref{eq:H0:low-rank}}{=} \,
		U_1^A(I-Y^D Y^A)^{-1} V_1^D+F_0 U^A_t \left((I-T^D_tT^A_t)-V^D_tG_0 U^A_t\right)^{-1}V^D_t E_0
		\\&\, =\,
			\begin{bmatrix}
				U^A_1& F_0 U^A_t
			\end{bmatrix}
			\begin{bmatrix}
				I-Y^DY^A & \\  & I-T^D_tT^A_t-V^D_tG_0 U^A_t
			\end{bmatrix}^{-1}
			\begin{bmatrix}
				V^D_1\\ V^D_t E_0
			\end{bmatrix}
			\\&\, \clue{\cref{eq:E0F0:matrix-multiply}}{=}\, 
		\begin{multlined}[t]
			\begin{bmatrix}
				U^A_1& \wtd A U^A_t
				\end{bmatrix}\begin{bmatrix}
				I  & (I-Y^DY^A)^{-1}Y^DV^A_1 U^A_t \\  & I \\
			\end{bmatrix}
			\\\cdot\begin{bmatrix}
				I-Y^DY^A & \\  & I-T^D_tT^A_t-V^D_tU_1^D(I-Y^AY^D)^{-1}V_1^A U^A_t
			\end{bmatrix}^{-1}
			\\\cdot\begin{bmatrix}
				I & \\ V^D_tU^D_1(I-Y^AY^D)^{-1}Y^A& I
				\end{bmatrix}\begin{bmatrix}
				V^D_1\\ V^D_t \wtd D
			\end{bmatrix}
		\end{multlined}
		\\&\, =\, \begin{bmatrix}
				U^A_1& \wtd A U^A_t
				\end{bmatrix}\begin{bmatrix}
			I-Y^DY^A & -Y^DV^A_1 U^A_t\\ -V^D_tU^D_1Y^A & I-T^D_tT^A_t-V^D_tU^D_1V^A_1 U^A_t
		\end{bmatrix}^{-1}\begin{bmatrix}
				V^D_1\\ V^D_t \wtd D
			\end{bmatrix}
		\\&\, = \, \begin{bmatrix}
				U^A_1& \wtd A U^A_t
				\end{bmatrix}\left(I-\begin{bmatrix}
				Y^D & \\ V^D_tU^D_1 & T^D_t
				\end{bmatrix}\begin{bmatrix}
				Y^A & V^A_1 U^A_t\\ & T^A_t
		\end{bmatrix}\right)^{-1}\begin{bmatrix}
				V^D_1\\ V^D_t \wtd D
			\end{bmatrix}
		\\&\, = \,U^A_{t+1}(I- T^D_{t+1}T^A_{t+1})^{-1}V^D_{t+1}
		.
	\end{align*}
	In the process we have already known $I-T^D_{t+1}T^A_{t+1}$ is nonsingular.

	Once \cref{eq:noniter:X:X:mare} is obtained, the operator expression for $X_{\star}$ will follow 
	because of $X_t\to X_{\star}$ by \cref{thm:fixedpoint:mare}.   
\end{proof}

It is not difficult to find out that
\cref{thm:fixedpoint:mare-toeplitz} coincides with the so-called decoupled form of $H_k$ in the ADDA introduced in \cite{guoCLL2020decoupled,guoCL2020highly} at $t=2^k$, 
which implies that the ADDA can be regarded as an acceleration of the fixed-point iteration \cref{eq:fixedpoint:mare},
or equivalently the ADDA only computes  the subsequence $X_1,X_2,X_4,\dots,X_{2^k},\dots$.

One may wonder if the sequence $\{X_t=\op D(X_{t-1})\}$ still converges when starting from an nonzero initial $X_0$. 
That is, we consider 
\begin{equation}\label{eq:fixedpoint:mare:arbitrary:initial}
	X_0=\Gamma^A\Gamma^D, \qquad X_{t+1}=\op D(X_t):= H_0 + F_0 X_{t} (I-G_0 X_{t})^{-1}E_0,
\end{equation}
where $F_0,E_0,G_0,H_0$ are defined as in \cref{eq:initial:mare}.
Provided that \cref{eq:assumption:valid-inv} holds, 
following the same observations that the generated sequence $\left\{\begin{bmatrix}
	I \\ X_t
\end{bmatrix}\right\}$ is essentially the outcome of the power iteration applied to $-\op C^{\alpha,\beta}(\mathcal{H})^{-1}$, 
we conclude  that $X_{t}\to X_{\star}$ and that $I-G_0X_t$ remains nonsingular for all $t$.
The remaining issue is whether the generated sequence $\set{X_t}$ still preserves a Toeplitz structure? The answer is yes and 
\cref{thm:mare:arb} provides the analog of \cref{thm:fixedpoint:mare-toeplitz} for an arbitrary initial term.

\begin{theorem}\label{thm:mare:arb}
	The sequence $\set{X_t}$ generated by \cref{eq:fixedpoint:mare:arbitrary:initial} is given by
	\begin{equation}\label{eq:noniter:X:arb}
		X_t=\begin{bmatrix}
			U_t^A &\wtd A^t\Gamma^A
			\end{bmatrix}\left( 
			I-\begin{bmatrix}
				T_t^D \\ \Gamma^D U_t^D
		\end{bmatrix}\begin{bmatrix}
		T_t^A &  V_t^A  \Gamma^A
	\end{bmatrix}\right)^{-1}\begin{bmatrix}
			V_t^D\\ \Gamma^D \wtd D^t 
		\end{bmatrix}
		,
	\end{equation}
	in which $ 
			I-\begin{bmatrix}
				T_t^D \\ \Gamma^D U_t^D
		\end{bmatrix}\begin{bmatrix}
		T_t^A &  V_t^A  \Gamma^A
	\end{bmatrix}$ is nonsingular,
	where $U_t^D, V_t^A,U_t^A, V_t^D, T_t^D, T_t^A$ are defined by \cref{eq:noniter:X:mare}.
\end{theorem}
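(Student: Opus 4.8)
The plan is to imitate the inductive proof of \cref{thm:fixedpoint:mare-toeplitz}, using that \cref{eq:noniter:X:arb} has the same shape as \cref{eq:noniter:X:X:mare}. Introduce the augmented quadruple
\[
	\what U^A_t=\begin{bmatrix} U^A_t & \wtd A^t\Gamma^A \end{bmatrix},\quad
	\what V^D_t=\begin{bmatrix} V^D_t \\ \Gamma^D\wtd D^t \end{bmatrix},\quad
	\what T^D_t=\begin{bmatrix} T^D_t \\ \Gamma^D U^D_t \end{bmatrix},\quad
	\what T^A_t=\begin{bmatrix} T^A_t & V^A_t\Gamma^A \end{bmatrix},
\]
so that the asserted identity \cref{eq:noniter:X:arb} reads $X_t=\what U^A_t(I-\what T^D_t\what T^A_t)^{-1}\what V^D_t$ with $I-\what T^D_t\what T^A_t$ nonsingular; this is \cref{eq:noniter:X:X:mare} verbatim with $(U^A_t,V^D_t,T^D_t,T^A_t)$ replaced by $(\what U^A_t,\what V^D_t,\what T^D_t,\what T^A_t)$. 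I would prove it by induction on $t\ge 0$. The base case $t=0$ is immediate from the empty-block conventions: $U^A_0,V^D_0,T^D_0,T^A_0$ are void, $\what U^A_0=\Gamma^A$, $\what V^D_0=\Gamma^D$, and $\what T^D_0\what T^A_0=0$, so the right-hand side of \cref{eq:noniter:X:arb} collapses to $\Gamma^A(I-0)^{-1}\Gamma^D=X_0$.

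For the inductive step, note that in the proof of \cref{thm:fixedpoint:mare-toeplitz} the blocks $U^A_t,V^D_t,T^D_t,T^A_t$ are used only through the representation of $X_t$ and through their one-step recursions read off from \cref{eq:noniter:X:mare} (prepend $U^A_1$ to $\wtd A U^A_t$; stack $V^D_1$ over $V^D_t\wtd D$; the bordered Toeplitz build-ups of $T^D_{t+1}$ and $T^A_{t+1}$). The crux is that the augmented quadruple obeys the \emph{identical} recursions:
\[
	\what U^A_{t+1}=\begin{bmatrix} U^A_1 & \wtd A\,\what U^A_t\end{bmatrix},\quad
	\what V^D_{t+1}=\begin{bmatrix} V^D_1 \\ \what V^D_t\wtd D\end{bmatrix},\quad
	\what T^D_{t+1}=\begin{bmatrix} Y^D & 0 \\ \what V^D_t U^D_1 & \what T^D_t\end{bmatrix},\quad
	\what T^A_{t+1}=\begin{bmatrix} Y^A & V^A_1\,\what U^A_t \\ 0 & \what T^A_t\end{bmatrix}.
\]
This is a direct block check from the definitions \cref{eq:ADBC-new,eq:noniter:X:mare}: the only subtlety is that $\wtd A\cdot\wtd A^t\Gamma^A=\wtd A^{t+1}\Gamma^A$ and $\Gamma^D\wtd D^t\cdot\wtd D=\Gamma^D\wtd D^{t+1}$ must slot, respectively, into the last column block of $\what U^A_{t+1}$, the last row block of $\what V^D_{t+1}$, the last row block of $\what T^D_{t+1}$, and the last column block of $\what T^A_{t+1}$, compatibly with the Toeplitz ordering of \cref{eq:noniter:X:mare}. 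Granting this, I would replay the chain of equalities in the proof of \cref{thm:fixedpoint:mare-toeplitz} with $(U^A_t,V^D_t,T^D_t,T^A_t)$ replaced throughout by $(\what U^A_t,\what V^D_t,\what T^D_t,\what T^A_t)$ — that chain uses only the low-rank forms \cref{eq:EFGH0:low-rank,eq:E0F0:matrix-multiply} of $E_0,F_0,G_0,H_0$, the identities \cref{eq:easy,eq:smwf}, and the shape of $X_t$, so it goes through unchanged and delivers $X_{t+1}=\what U^A_{t+1}(I-\what T^D_{t+1}\what T^A_{t+1})^{-1}\what V^D_{t+1}$, with the nonsingularity of $I-\what T^D_{t+1}\what T^A_{t+1}$ produced en route exactly as $I-T^D_{t+1}T^A_{t+1}$ was. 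The validity of that computation needs $I-G_0X_t$ to be nonsingular for every $t$, which is precisely what the power-iteration reading of \cref{eq:fixedpoint:mare:arbitrary:initial} stated just before the theorem guarantees. Unhatting the conclusion gives \cref{eq:noniter:X:arb} at $t+1$.

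I expect the only genuine obstacle to be the block bookkeeping — placing the $\Gamma^A,\Gamma^D$ blocks correctly and keeping the row/column orders of the Toeplitz pieces straight both when verifying the augmented recursions and when transporting the long chain of equalities. No new idea beyond \cref{thm:fixedpoint:mare-toeplitz} is needed; the whole content is the choice of augmented quadruple that lets the original induction run verbatim, together with checking that it reduces correctly at $t=0$ and that the nonsingularity inherited along the way is the one already established for the arbitrary-initial iteration.
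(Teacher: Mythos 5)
Your proposal is correct and follows essentially the same route as the paper's own proof in \cref{sec:tedious-proofs}: the paper verifies the base case (its $t=1$ computation is exactly your inductive step applied at $t=0$ with empty blocks) and then replays the block-manipulation chain of \cref{thm:fixedpoint:mare-toeplitz} with the $\Gamma^A,\Gamma^D$-augmented quantities written out explicitly, ending with precisely the block identifications you encode in the recursions for $\begin{bmatrix} U^A_{t+1} & \wtd A^{t+1}\Gamma^A\end{bmatrix}$, $\begin{bmatrix} T^D_{t+1} \\ \Gamma^D U^D_{t+1}\end{bmatrix}$, etc. Your only organizational difference is to observe that the chain is generic in the quadruple and to phrase the argument as a verbatim replay, which is a compression of, not a departure from, the paper's computation.
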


Note that \cref{eq:noniter:X:arb} reduces to \cref{eq:noniter:X:X:mare} at $\Gamma^A=0, \Gamma^D=0$.
	The proof is quite similar to that for the DAREs \cite{guoL2023intrinsic} and the details are deferred in \cref{sec:tedious-proofs}. 

\subsection{Acceleration by using different shifts}\label{ssec:different-shifts}
The fixed-point iteration, as we can see, converges slowly.
Since the iteration depends on the pair $(\alpha,\beta)$, referred to as a \emph{shift},
the convergence would be accelerated by using different shifts at each step.
Motivated by this idea, we consider a new iteration, termed \emph{Flexible Fixed-Point Iteration}:
\begin{equation}\label{eq:fixedpoint:mare:shifts}
	X_0=\Gamma^A\Gamma^D, \qquad X_{t+1}=\op D_t(X_t):= H_t + F_t X_{t} (I-G_t X_{t})^{-1}E_t,
\end{equation}
where $E_t=E(\alpha_t,\beta_t),F_t=F(\alpha_t,\beta_t),G_t=G(\alpha_t,\beta_t),H_t=H(\alpha_t,\beta_t)$ are defined as in \cref{eq:initial:mare}.
The resulting sequence $\set{X_t}$ retains a similar structure, as shown in \cref{thm:fixedpoint:mare-toeplitz:shifts}.
Before stating the result, we introduce some notation. 
For any scalar or matrix $\lambda$ (with $(\alpha+\lambda)^{-1}$ interpreted as $(\alpha I+\lambda)^{-1}$ in the matrix case), define
	\[
		\op C^{\alpha,\beta}_{i,j}(\lambda)=\prod_{l=1}^j\op C^{\alpha_{i-l},\beta_{i-l}}(\lambda)
		=\prod_{l=1}^j\frac{\beta_{i-l}-\lambda}{\lambda+\alpha_{i-l}},\quad
		\ol{\op C}^{\alpha,\beta}_{i,j}(\lambda)=\frac{\op C^{\alpha,\beta}_{i,j}(\lambda)}{\lambda+\alpha_{i-j-1}},\quad
		\ul{\op C}^{\alpha,\beta}_{i,j}(\lambda)=\frac{\op C^{\alpha,\beta}_{i,j}(\lambda)}{\lambda+\alpha_i},\quad
	\]
with the convention $\prod_{l=1}^0(*)=1$. 
We further set
	\[
		\begin{gathered}[b]
		\op V^{\alpha,\beta}_t(\lambda)= \begin{bmatrix}
				 \ol{\op C}^{\alpha,\beta}_{t,0}(\lambda) \\  \ol{\op C}^{\alpha,\beta}_{t,1}(\lambda) \\ \vdots \\  \ol{\op C}^{\alpha,\beta}_{t,t-1}(\lambda)
			 \end{bmatrix}, 
\quad
\ul{\op V}^{\alpha,\beta}_t(\lambda)= \begin{bmatrix}
				\ul{\op C}^{\alpha,\beta}_{t-1,t-1}(\lambda) \\ \vdots \\   \ul{\op C}^{\alpha,\beta}_{1,1}(\lambda) \\  \ul{\op C}^{\alpha,\beta}_{0,0}(\lambda)
			 \end{bmatrix}, 
\quad
\op T^{\alpha,\beta}_t(\lambda)=
\begin{bmatrix}
					\frac{\ol{\op C}^{\alpha,\beta}_{t,0}(\lambda)}{\alpha_{t-1}+\beta_{t-1}}                  &         &         &     \\
					\frac{\ol{\op C}^{\alpha,\beta}_{t-1,0}(\lambda)}{\alpha_{t-1}+\lambda}             & \ddots  &         &     \\
					\vdots               & \ddots  & \frac{\ol{\op C}^{\alpha,\beta}_{2,0}(\lambda)}{\alpha_1+\beta_1}     &     \\
					\frac{\ol{\op C}^{\alpha,\beta}_{t-1,t-2}(\lambda)}{\alpha_{t-1}+\lambda} & \cdots  & \frac{\ol{\op C}^{\alpha,\beta}_{1,0}(\lambda)}{\alpha_1+\lambda}  & \frac{\ol{\op C}^{\alpha,\beta}_{1,0}(\lambda)}{\alpha_0+\beta_0}\\
			\end{bmatrix}
			,
		\end{gathered}
	\]
	and 
	\begin{equation}\label{eq:Omega-J}
		\Omega_t=\diag(\alpha_{t-1}+\beta_{t-1},\dots,\alpha_1+\beta_1,\alpha_0+\beta_0),
				\quad
		J_t=\diag(1,-1,\dots,(-1)^{t-1}).
	\end{equation}
With this notation one verifies
\begin{equation}\label{eq:T-inv}
	\op V^{\alpha,\beta}_t(\lambda)=\op T^{\alpha,\beta}_t(\lambda)\Omega_tJ_t\bs 1_t,
		\qquad
J_tP^{\alpha,\beta}_tJ_t+\lambda I_t=[\op T^{\alpha,\beta}_t(\lambda)\Omega_t]^{-1},
\end{equation}
where 
$P^{\alpha,\beta}_t=\begin{bmatrix}
		\alpha_{t-1}                 &         &         &     \\
		\alpha_{t-1}+\beta_{t-1}            & \ddots  &         &     \\
		\vdots               & \ddots  & \alpha_1    &     \\
		\alpha_{t-1}+\beta_{t-1}	 & \cdots  & \alpha_1+\beta_1  & \alpha_0\\
\end{bmatrix}$.

\begin{theorem}\label{thm:fixedpoint:mare-toeplitz:shifts}
	Suppose that \cref{eq:assumption:alpha-beta:inv,eq:assumption:valid-inv} hold for each shift in $(\alpha_0,\beta_0),\dots,(\alpha_{t-1},\beta_{t-1})$. 
	The sequence $\set{X_t}$ generated by the flexible fixed-point iteration \cref{eq:fixedpoint:mare:shifts} is given by
	\begin{equation}\label{eq:noniter:X:arb:shifts}
		X_t=\begin{bmatrix}
			U_t^A &\op C^{\beta,\alpha}_{t,t}(A)\Gamma^A
			\end{bmatrix}\left( 
			I-\begin{bmatrix}
				T_t^D \\ \Gamma^D U_t^D
		\end{bmatrix}\begin{bmatrix}
		T_t^A &  V_t^A  \Gamma^A
	\end{bmatrix}\right)^{-1}\begin{bmatrix}
			V_t^D\\ \Gamma^D \op C^{\alpha,\beta}_{t,t}(D)
		\end{bmatrix}
		,
	\end{equation}
	in which $ 
			I-\begin{bmatrix}
				T_t^D \\ \Gamma^D U_t^D
		\end{bmatrix}\begin{bmatrix}
		T_t^A &  V_t^A  \Gamma^A
	\end{bmatrix}$ is nonsingular,
	where $U_t^D, V_t^A,U_t^A, V_t^D, T_t^D, T_t^A$ are defined by
	\begin{subequations}\label{eq:noniter:X:mare:shifts}
		\begin{alignat}{2}
			U^D_t&=[\ul{\op V}^{\alpha,\beta}_t(D^{\T})]^{\T}(I_t\otimes L^C),
			&V^A_t &= (\Omega_t\otimes R^C)\ul{\op V}^{\beta,\alpha}_t(A),\\
			U^A_t&=[\op V^{\beta,\alpha}_t(A^{\T})]^{\T}(I_t\otimes L^B),
			&V^D_t &= (\Omega_t\otimes R^B)\op V^{\alpha,\beta}_t(D),\\
			T^A_t &= (\Omega_t\otimes R^C)[\op T^{\beta,\alpha}_t(A^{\T})]^{\T}(I_t\otimes L^B),
			\quad&T^D_t &= (\Omega_t\otimes R^B)\op T^{\alpha,\beta}_t(D)(I_t\otimes L^C).
		\end{alignat}
	\end{subequations}
\end{theorem}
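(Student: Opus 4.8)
The plan is to prove \cref{thm:fixedpoint:mare-toeplitz:shifts} by induction on $t$, running parallel to the proof of \cref{thm:fixedpoint:mare-toeplitz} but carrying along the bookkeeping of how a fresh shift $(\alpha_t,\beta_t)$ enters the structured factors. The base case $t=0$ is immediate: with the convention $\prod_{l=1}^0(*)=1$ one has $\op C^{\beta,\alpha}_{0,0}(A)=I$ and $\op C^{\alpha,\beta}_{0,0}(D)=I$, while $U_0^A,V_0^D,T_0^A,T_0^D$ are zero-dimensional, so the right-hand side of \cref{eq:noniter:X:arb:shifts} collapses to $\Gamma^A(I-0)^{-1}\Gamma^D=\Gamma^A\Gamma^D=X_0$, with middle matrix equal to $I$ and hence trivially nonsingular.

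For the inductive step I would first rewrite the coefficients of the map $\op D_t$ in low-rank factored form, exactly as \cref{eq:EFGH0:low-rank,eq:E0F0:matrix-multiply} do at $t=0$ but now using the shift $(\alpha_t,\beta_t)$: set $\wtd A^{(t)}=\op C^{\beta_t,\alpha_t}(A)$, $\wtd D^{(t)}=\op C^{\alpha_t,\beta_t}(D)$, $L_B^{(t)}=A_{\beta_t}^{-1}L^B$, $R_B^{(t)}=(\alpha_t+\beta_t)R^BD_{\alpha_t}^{-1}$, $Y^{A,(t)}=R^CA_{\beta_t}^{-1}L^B$, and the $C$-side analogues, so that $H_t=L_B^{(t)}(I-Y^{D,(t)}Y^{A,(t)})^{-1}R_B^{(t)}$, $F_t=\wtd A^{(t)}+(\text{rank-}p\text{ term})$, $E_t=\wtd D^{(t)}+(\text{rank-}q\text{ term})$, and $G_t$ is a rank term. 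Nonsingularity of $I-Y^{D,(t)}Y^{A,(t)}$ follows from \cref{eq:assumption:alpha-beta:inv} through \cref{eq:smwf}, just as in the proof of \cref{thm:fixedpoint:mare-toeplitz}. Then I would substitute the inductive form of $X_t$ into $X_{t+1}=H_t+F_tX_t(I-G_tX_t)^{-1}E_t$ and repeat the block-bordering manipulation of that proof: use \cref{eq:easy} to move the inner inverse outside, absorb $H_t$ as a new diagonal block, and use the factored forms of $E_t,F_t,G_t$ to peel off one extra layer of $\wtd A^{(t)}$ and $\wtd D^{(t)}$ on the outside and to adjoin one bordering row and column to the middle inverse. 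This expresses $X_{t+1}$ in the form \cref{eq:noniter:X:arb:shifts} with $t$ replaced by $t+1$, establishing nonsingularity of the new middle matrix along the way, and it then remains to match the blocks produced by this manipulation with $U_{t+1}^A,V_{t+1}^D,T_{t+1}^A,T_{t+1}^D$ as defined in \cref{eq:noniter:X:mare:shifts}.

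This matching step is where I expect the main difficulty to lie. One must check that prepending the data of the shift $(\alpha_t,\beta_t)$ to the structured objects $\op V^{\alpha,\beta}_t$ and $\op T^{\alpha,\beta}_t$ reproduces $\op V^{\alpha,\beta}_{t+1}$ and $\op T^{\alpha,\beta}_{t+1}$ after accounting for the diagonal normalizations $\Omega_t$ and $J_t$; concretely, this reduces to the recursions satisfied by the entries $\ol{\op C}^{\alpha,\beta}_{i,j}$ and $\ul{\op C}^{\alpha,\beta}_{i,j}$ when a new Cayley factor is multiplied in, reconciled via the identities \cref{eq:T-inv}. A somewhat more modular alternative is to apply the $t=1$ instance of \cref{thm:mare:arb} at each stage — with shift $(\alpha_t,\beta_t)$ and with $X_t$, in its already-factored form, playing the role of the initial term — and then show the resulting one-step updates telescope into \cref{eq:noniter:X:arb:shifts}; this isolates exactly the same combinatorial identity, so it does not bypass the difficulty but makes the algebra cleaner. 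Since the computation runs parallel to the DARE case of \cite{guoL2023intrinsic}, the full verification may reasonably be relegated to \cref{sec:tedious-proofs}.
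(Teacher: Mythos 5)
Your plan is exactly the route the paper takes: the paper's ``proof'' of \cref{thm:fixedpoint:mare-toeplitz:shifts} simply defers to the induction written out for \cref{thm:mare:arb} in \cref{sec:tedious-proofs} (base step via the low-rank refactoring \cref{eq:EFGH0:low-rank,eq:E0F0:matrix-multiply} with the current shift, then the block-bordering recursion using \cref{eq:easy}, with nonsingularity of the enlarged middle matrix read off from the bordered factorization), which is precisely your inductive scheme adapted to a fresh shift $(\alpha_t,\beta_t)$ at each step, the only extra work being the bookkeeping match with $\op V^{\alpha,\beta}_t$, $\op T^{\alpha,\beta}_t$ via \cref{eq:T-inv} that you correctly identify. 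So the proposal is correct and essentially coincides with the paper's (omitted) argument.
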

\begin{proof}
	The proof is nearly the same as \cref{thm:mare:arb} and thus omitted.
\end{proof}

The convergence behavior of this iteration depends crucially on the chosen shifts. 
Note that
\[
	\begin{bmatrix}
		I \\ X_t
	\end{bmatrix}
	=\op C^{\alpha,\beta}_{t,t}(\mathcal{H})^{-1}
	\begin{bmatrix}
	I \\ X_0
\end{bmatrix}K
,
\] 
where $K=(-1)^tR_0R_1\cdots R_{t-1}$ is a nonsingular normalization with $R_k=-(I-G_kX_k)^{-1}E_k$ for $k=0, 1, \cdots, t-1$, and $\op C^{\alpha,\beta}_{t,t}(\lambda)
$ is a rational function that is a ratio of two degree-$t$ polynomials.

A natural question is: \emph{which shifts ensure convergence?} 
An exact characterization is difficult, though by analogy with the CARE case \cite{massoudiOR2016analysis}, one may conjecture a sufficient condition similar to the non-Blaschke condition. 
A detailed analysis is left for future work.  

On the other hand, recalling that the fixed-point iteration essentially acts as a power method applied to $-\op C^{\alpha,\beta}(\mathcal{H})^{-1}$, one expects faster convergence when the chosen shifts maximize the spectral separation between stabilizing and anti-stabilizing eigenvalues. 
This motivates the following optimization:
\[
	\inf_{\op C^{\alpha,\beta}_{t,t}}\frac{\max_{\lambda\in \lambda(\mathcal{H})\setminus \mathbb{C}_-}\abs{\op C^{\alpha,\beta}_{t,t}(\lambda)}^{-1}}{\min_{\lambda\in \lambda(\mathcal{H})\cap \mathbb{C}_-}\abs{\op C^{\alpha,\beta}_{t,t}(\lambda)}^{-1}}
=
\left(\sup_{\op C^{\alpha,\beta}_{t,t}}\frac{\min_{\lambda\in \lambda(\mathcal{H})\setminus \mathbb{C}_-}\abs{\op C^{\alpha,\beta}_{t,t}(\lambda)}}{\max_{\lambda\in \lambda(\mathcal{H})\cap \mathbb{C}_-}\abs{\op C^{\alpha,\beta}_{t,t}(\lambda)}}
\right)^{-1}
=:\frac{1}{\kappa_t}
.
\]
The problem of determining the optimal $\op C^{\alpha,\beta}_{t,t}$ and the minimal $\frac{1}{\kappa_t}$ for two given sets (here, $\lambda(\mathcal{H})\setminus \mathbb{C}_-$ and $\lambda(\mathcal{H})\cap \mathbb{C}_-$) is known as \emph{Zolotarev's third problem}, 
named after Zolotarev's work \cite{zolotarev1877application} where the problem is solved for the intervals $[-1,1]$ and $(-\infty, -\gamma]\cup[\gamma,+\infty)$ for $\gamma>1$.
Gonchar \cite{gonchar1968generalized,gonchar1969zolotarev} extended the result to any intervals and a plane condenser (a pair of two disjoint closed sets $E,F$ of which each has a connected complement in $\mathbb{C}$ and whose boundaries satisfy $\partial(E\cup F)=\partial E\cup \partial F$).
The theoretical analysis is highly nontrivial, but roughly speaking, if the smallest enclosing disks of the two sets $\lambda(\mathcal{H})\setminus \mathbb{C}_-,\lambda(\mathcal{H})\cap \mathbb{C}_-$ have radii $a,b$ and center distance $c$, then an exponential convergence is implied:
\[
	\lim_{t\to\infty}\kappa_t^{1/t}=p+\sqrt{p^2-1}, 
	\qquad p=\frac{c^2-a^2-b^2}{2ab},
\]
after some calculations on the ratio of radii of the annulus that are the image of the complement of the two smallest enclosing disks in the complex plane under a proper M\"obius transformation.

In practice, asymptotically optimal rational functions can be approximated by choosing generalized \emph{Leja points} \cite{bagby1969interpolation}. 
These are generated via a greedy procedure: starting from $-\ol\alpha_0$ and $\beta_0$ with minimal distance, each subsequent $(\alpha_k,\beta_k)$ is selected such that
\[
	\max_{\lambda\in \lambda(\mathcal{H})\setminus \mathbb{C}_-}\abs{\op C^{\alpha,\beta}_{k,k}(\lambda)}=\abs{\op C^{\alpha,\beta}_{k,k}(-\ol\alpha_k)},
	\min_{\lambda\in \lambda(\mathcal{H})\cap \mathbb{C}_-}\abs{\op C^{\alpha,\beta}_{k,k}(\lambda)}=\abs{\op C^{\alpha,\beta}_{k,k}(\beta_k)}.
\]

\section{Incorporation and RADI}\label{sec:incorporation-and-radi}
The incorporation technique, also known as defect correction, is a practical approach for solving matrix equations and is particularly well suited to algebraic Riccati equations. For example, it leads directly to the efficient RADI method for CAREs. In this section, we extend this idea and develop a variant of RADI for NAREs~\cref{eq:mare}. 

The basic idea is as follows. Once an approximate solution $\wtd X$ is obtained, we write the exact solution as $X_\star = \wtd X + \Delta$. Substituting this expression into the original equation yields a new equation in $\Delta$, from which an approximation $\wtd \Delta$ can be computed. The updated solution $\wtd X + \wtd \Delta$ should then provide a better approximation than $\wtd X$.

A critical aspect of the incorporation technique is that the transformed equation for $\Delta$ preserves the structures of the original equation. This is confirmed for NAREs~\cref{eq:mare} by the following result.

\begin{lemma}\label{lm:incorp:mare}
	Let $\wtd X$ be an approximate solution to the NARE \cref{eq:mare}.  
	\begin{enumerate}
		\item $\Delta=X_{\star}-\wtd X$ is a solution to the following matrix equation
			\begin{equation}\label{eq:incorp:mare}
				\Delta C\Delta-\Delta (D-C\wtd X)-(A-\wtd XC)\Delta+\op R(\wtd X)=0.
			\end{equation}
		\item Conversely, if $\Delta$ is a solution to the matrix equation \cref{eq:incorp:mare}, then $\wtd X+\Delta$ is a solution to the equation  \cref{eq:mare}. 
		\item There exists a unique stabilizing solution $\Delta_{\star}$ to \cref{eq:incorp:mare}, and $X_{\star} =\wtd X + \Delta_{\star}$. 
	\end{enumerate}
\end{lemma}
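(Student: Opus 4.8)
The plan is to prove the three items essentially by direct algebraic substitution and by transferring the existence/uniqueness theory established for the NARE \cref{eq:mare} (under assumption \cref{eq:assumption:regular-M}) to the shifted equation \cref{eq:incorp:mare}. For Item~1, I would simply expand $\op R(\wtd X + \Delta) = 0$. Writing $X_\star = \wtd X + \Delta$ and plugging into $\op R(X):=XCX-XD-AX+B$, the cross terms regroup: $X_\star C X_\star = \wtd X C\wtd X + \wtd X C\Delta + \Delta C\wtd X + \Delta C\Delta$, and collecting the terms linear in $\Delta$ against $-X_\star D - A X_\star$ gives precisely $\Delta C\Delta - \Delta(D - C\wtd X) - (A - \wtd X C)\Delta + \op R(\wtd X) = 0$, since the $\Delta$-free part is exactly $\wtd X C\wtd X - \wtd X D - A\wtd X + B = \op R(\wtd X)$. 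Item~2 is the same computation read in reverse: if $\Delta$ solves \cref{eq:incorp:mare}, then reassembling the terms shows $\op R(\wtd X + \Delta) = 0$.

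For Item~3, the key observation is that \cref{eq:incorp:mare} is itself an NARE with coefficients $A' = A - \wtd X C$, $D' = D - C\wtd X$, $C' = C$, $B' = \op R(\wtd X)$, and its Hamiltonian-like matrix is
\[
	\mathcal{H}' = \begin{bmatrix} D - C\wtd X & -C \\ \op R(\wtd X) & -(A - \wtd X C) \end{bmatrix}
	= \begin{bmatrix} I & 0 \\ -\wtd X & I \end{bmatrix} \mathcal{H} \begin{bmatrix} I & 0 \\ \wtd X & I \end{bmatrix},
\]
which one verifies by block multiplication using $\op R(\wtd X) = \wtd X C\wtd X - \wtd X D - A\wtd X + B$. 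Thus $\mathcal{H}'$ is similar to $\mathcal{H}$ via the invertible block-triangular transformation $\left[\begin{smallmatrix} I & 0 \\ -\wtd X & I\end{smallmatrix}\right]$, so $\mathcal{H}'$ has the same spectrum and the same splitting ($m$ eigenvalues outside $\mathbb{C}_-$, $n$ inside) as $\mathcal{H}$. Moreover, if $X_\star$ is the stabilizing solution of \cref{eq:mare}, then $\mathcal{H}\left[\begin{smallmatrix} I \\ X_\star\end{smallmatrix}\right] = \left[\begin{smallmatrix} I \\ X_\star\end{smallmatrix}\right](D - CX_\star)$ transforms into $\mathcal{H}'\left[\begin{smallmatrix} I \\ X_\star - \wtd X\end{smallmatrix}\right] = \left[\begin{smallmatrix} I \\ X_\star - \wtd X\end{smallmatrix}\right](D - CX_\star)$, and $D - CX_\star = D' - C'(X_\star - \wtd X)$, so $\Delta_\star := X_\star - \wtd X$ is a stabilizing solution of \cref{eq:incorp:mare}. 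Uniqueness of the stabilizing solution follows from the characterization recalled from \cite[Theorem~2.5]{biniIM2012numerical}: the stabilizing solution corresponds to the unique graph invariant subspace of $\mathcal{H}'$ associated with the $n$ eigenvalues in $\mathbb{C}_-$, which is determined by the spectral splitting alone.

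The main obstacle — such as it is — is bookkeeping rather than conceptual: one must check that the similarity identity for $\mathcal{H}'$ holds exactly with the residual $\op R(\wtd X)$ in the $(2,1)$ block (this is where the precise form of the constant term in \cref{eq:incorp:mare} is pinned down), and one must be careful that the hypotheses guaranteeing a \emph{unique} stabilizing solution still apply — here they do, because similarity preserves the eigenvalue count and the graph-subspace argument needs nothing more. If one prefers to invoke \cref{eq:assumption:regular-M} directly for \cref{eq:incorp:mare}, an alternative is to note that stabilizability is likewise invariant under the state-space coordinate change induced by $\wtd X$; but the similarity-of-$\mathcal{H}$ route is cleaner and avoids re-examining the stabilizability pairs.
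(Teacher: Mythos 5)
Your proposal is correct and follows essentially the same route as the paper: Items 1 and 2 by direct expansion of $\op R(\wtd X+\Delta)$, and Item 3 via exactly the similarity identity \cref{eq:HwtdX} relating $\mathcal{H}$ and the Hamiltonian-like matrix of \cref{eq:incorp:mare}, with the graph-subspace/stability correspondence $D-CX_\star = (D-C\wtd X)-C\Delta_\star$ giving $\Delta_\star = X_\star-\wtd X$. No gaps worth noting.
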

\begin{proof}
	Items~1 and~2 is staightforward:
	$0=(\wtd X+\Delta)C(\wtd X+\Delta)-(\wtd X+\Delta)D-A(\wtd X+\Delta)+B
	=\Delta C\Delta-(A-\wtd XC)\Delta-\Delta (D-C\wtd X)+\wtd XC\wtd X-\wtd XD-A\wtd X+B
	$.

	Item~3: note that 
	\begin{equation}\label{eq:HwtdX}
		\mathcal{H}_{\wtd X}:=
		\begin{bmatrix}
			D-C\wtd X & -C \\ \op R(\wtd X) & -(A-\wtd X C)
		\end{bmatrix}
		=
		\begin{bmatrix}
			I & 0 \\ -\wtd X & I
		\end{bmatrix}
		\mathcal{H} \begin{bmatrix}
			I & 0 \\ \wtd X & I
		\end{bmatrix}.
	\end{equation}
	Hence both \cref{eq:incorp:mare} and its dual equation admit unique stabilizing solutions.
	Moreover, $\lambda(D-CX_{\star}) \subset \mathcal{C}_{-}$ if and only if $\lambda(D-C\wtd X - C \Delta) \subset \mathcal{C}_{-}$, resulting that $\Delta = \Delta_{\star}$.  
\end{proof}

\subsection{Low-rank factorization of the residual}\label{ssec:low-rank-factorization-of-the-residual}
To apply the incorporation technique within the fixed-point iteration \cref{eq:fixedpoint:mare}, a natural question is how to factorize the residual $\op R(\wtd X)$. The same idea extends to the flexible fixed-point iteration \cref{eq:fixedpoint:mare:shifts}. For $\wtd X = X_t$ generated by \cref{eq:fixedpoint:mare:shifts}, the following theorem provides such a factorization.
\begin{theorem}\label{thm:incorp:mare}
For $X_t$ defined by \cref{eq:fixedpoint:mare:shifts} with $X_0=0$, the residual satisfies $\op R(X_t)=L^B_{t}R^B_{t}$, where 
\begin{subequations}\label{eq:thm:incorp:mare}
	\begin{align}
		L^B_{0}&=L^B,\qquad L^B_{t}=L^B-U^A_t(I-T^D_tT^A_t)^{-1}(\Omega_tJ_t\bs1_t\otimes I_p),\\
		R^B_{0}&=R^B,\qquad R^B_{t}=R^B-(\bs1^{\T}_tJ_t\otimes I_p)(I-T^D_tT^A_t)^{-1}V^D_t,
	\end{align}
\end{subequations}
and $U_t^A, T_t^D, T_t^A, V_t^D$ are as in \cref{eq:noniter:X:mare:shifts} and $\Omega_t,J_t$ are as in \cref{eq:Omega-J}.  
\end{theorem}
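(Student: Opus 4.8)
The plan is to argue by induction on $t$, transporting the factorization from step $t$ to step $t+1$ through the incorporation machinery of \cref{lm:incorp:mare}. The base case $t=0$ is immediate: $X_0=0$ gives $\op R(X_0)=B=L^BR^B$, which is exactly $L^B_0R^B_0$, of rank at most $p$. For the inductive step, assume $\op R(X_t)=L^B_tR^B_t$. The first ingredient is the identity $\op R(X_{t+1})=\op R^{\mathrm{inc}}_t(X_{t+1}-X_t)$, where $\op R^{\mathrm{inc}}_t(\Delta):=\Delta C\Delta-\Delta(D-CX_t)-(A-X_tC)\Delta+\op R(X_t)$ is the residual of the incorporated NARE \cref{eq:incorp:mare} with base point $X_t$; this is nothing but the expansion carried out in the proof of \cref{lm:incorp:mare}. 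The second ingredient is that one step of the flexible fixed-point iteration is exactly the defect correction of rank at most $p$ associated with that incorporated equation, i.e.\ $X_{t+1}-X_t=H^{\mathrm{inc}}_t$, where $H^{\mathrm{inc}}_t$ is the $H$-coefficient \cref{eq:initial:mare} of \cref{eq:incorp:mare} at the shift $(\alpha_t,\beta_t)$. To see this, recall that $\begin{bmatrix}I\\X_{t+1}\end{bmatrix}$ is the normalized image of $\begin{bmatrix}I\\X_t\end{bmatrix}$ under $-\op C^{\alpha_t,\beta_t}(\mathcal{H})^{-1}$, while by \cref{eq:HwtdX} one has $\op C^{\alpha_t,\beta_t}(\mathcal{H}_{X_t})^{-1}=\begin{bmatrix}I&0\\-X_t&I\end{bmatrix}\op C^{\alpha_t,\beta_t}(\mathcal{H})^{-1}\begin{bmatrix}I&0\\X_t&I\end{bmatrix}$; applying the right-hand side to $\begin{bmatrix}I\\0\end{bmatrix}$ and reading off the blocks shows that the one-step fixed-point iterate of \cref{eq:incorp:mare} from $0$, which is just $H^{\mathrm{inc}}_t$, equals $X_{t+1}-X_t$. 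Together these two ingredients reduce the inductive step to the $t=1$ instance of the present theorem, applied to the NARE \cref{eq:incorp:mare} whose coefficient data is $A-X_tC$, $D-CX_t$, $C=L^CR^C$, and inhomogeneous term $\op R(X_t)=L^B_tR^B_t$.

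It therefore suffices to establish the $t=1$ case for a generic NARE: $\op R(H_0)=L^B_1R^B_1$ with $L^B_1=L^B-U^A_1(I-T^D_1T^A_1)^{-1}(\Omega_1J_1\bs1_1\otimes I_p)$ and $R^B_1=R^B-(\bs1^{\T}_1J_1\otimes I_p)(I-T^D_1T^A_1)^{-1}V^D_1$, where at $t=1$ the objects $\Omega_1=\alpha_0+\beta_0$, $J_1=1$, and the blocks of \cref{eq:noniter:X:mare:shifts} degenerate to scalars or single blocks. This is a direct substitution: one inserts $X_1=H_0=L_B(I-Y^DY^A)^{-1}R_B$ from \cref{eq:H0:low-rank} into $\op R(X_1)=X_1CX_1-X_1D-AX_1+B$, rewrites $A=A_{\beta_0}-\beta_0I$ and $D=D_{\alpha_0}-\alpha_0I$, and repeatedly applies the Sherman--Morrison--Woodbury formula \cref{eq:smwf} together with the pull-through identities \cref{eq:easy} to collapse the four terms into a single product of an $m\times p$ matrix and a $p\times n$ matrix; matching the two factors against \cref{eq:noniter:X:mare:shifts} specialized to $t=1$ gives $L^B_1$ and $R^B_1$.

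The main obstacle is the final bookkeeping step: converting the $t=1$ data of the incorporated equation back into the $(t+1)$-level data of the original equation. One must unfold $A-X_tC=A-U^A_t(I-T^D_tT^A_t)^{-1}V^D_tL^CR^C$ and $D-CX_t$, use \cref{eq:smwf} to invert $\beta_tI+A-X_tC$ and $\alpha_tI+D-CX_t$, substitute into the $t=1$ formulas of the incorporated equation, and verify that the results telescope into $L^B-U^A_{t+1}(I-T^D_{t+1}T^A_{t+1})^{-1}(\Omega_{t+1}J_{t+1}\bs1_{t+1}\otimes I_p)$ and $R^B-(\bs1^{\T}_{t+1}J_{t+1}\otimes I_p)(I-T^D_{t+1}T^A_{t+1})^{-1}V^D_{t+1}$. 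This is the same bordered-Toeplitz manipulation that drives the inductive step in the proof of \cref{thm:fixedpoint:mare-toeplitz:shifts}: appending the shift $(\alpha_t,\beta_t)$ enlarges $U^A_t\to U^A_{t+1}$, $V^D_t\to V^D_{t+1}$, $T^A_t\to T^A_{t+1}$, $T^D_t\to T^D_{t+1}$ in precisely the pattern recorded in \cref{eq:noniter:X:mare:shifts}, and the identities \cref{eq:T-inv} relating $\op V^{\alpha,\beta}_t$, $\op T^{\alpha,\beta}_t$, $\Omega_t$ and $J_t$ are what make the $\Omega_tJ_t\bs1_t$ right-hand blocks in \cref{eq:thm:incorp:mare} consistent across the recursion. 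An alternative that bypasses the incorporation detour is to substitute the closed form for $X_t$ from \cref{thm:fixedpoint:mare-toeplitz:shifts} directly into $\op R(X_t)$ and simplify; this trades the induction for a single longer block computation but encounters the same Toeplitz identities.
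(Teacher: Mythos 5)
Your overall skeleton is a legitimate alternative to the paper's argument: the paper proves the theorem by a single direct block computation, substituting the closed form \cref{eq:noniter:X:arb:shifts} into $\op R(X_t)-L^B_tR^B_t$ and collapsing it to zero via the identities \cref{eq:pf:thm:incorp:mare} and \cref{eq:pf:thm:incorp:P} — i.e.\ exactly the route you relegate to a closing aside. Your two preparatory ingredients are sound: $\op R(X_{t+1})=\op R^{\mathrm{inc}}_t(X_{t+1}-X_t)$ is the expansion from \cref{lm:incorp:mare}, and $X_{t+1}-X_t=H^{\mathrm{inc}}_t$ does follow from conjugating $\op C^{\alpha_t,\beta_t}(\mathcal{H})$ by $\begin{bmatrix}I&0\\X_t&I\end{bmatrix}$ as in \cref{eq:HwtdX} (this is precisely why the paper can assert that the RADI recursion P1--P6 reproduces \cref{eq:fixedpoint:mare:shifts}). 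The reduction to $t=1$ then yields the rank-$p$ update $L^B_{t+1}=L^B_t-\what L^X_t\Upsilon_t^{-1}$, $R^B_{t+1}=R^B_t-\Upsilon_t^{-1}\what R^X_t$.

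The genuine gap is the step you label ``bookkeeping'': showing that this recursion telescopes into the closed forms \cref{eq:thm:incorp:mare}. That identity \emph{is} the theorem, and it is not ``the same bordered-Toeplitz manipulation'' as in the proof of \cref{thm:fixedpoint:mare-toeplitz:shifts}, for two concrete reasons. First, that proof tracks $X_t$, whereas here you must track the border vectors $\Omega_tJ_t\bs1_t\otimes I_p$ and $\bs1_t^{\T}J_t\otimes I_p$; because $J_{t+1}=\diag(1,-1,\dots,(-1)^{t})$ prepends the new shift, the ``old'' components of $\Omega_{t+1}J_{t+1}\bs1_{t+1}$ are the \emph{negatives} of those of $\Omega_tJ_t\bs1_t$, so the difference $L^B_{t+1}-L^B_t$ is not simply a new appended block and the cancellation is delicate. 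Second, to expand $\what L^X_t=(A+\beta_tI-X_tL^CR^C)^{-1}L^B_t$ against the level-$(t{+}1)$ Toeplitz data you need precisely the relations the paper isolates for this purpose — $R^CU^A_t=(\bs1_t^{\T}J_t\otimes I_p)T^A_t$, the shifted Sylvester-type identities \cref{eq:pf:thm:incorp:mare:2}, the commutations \cref{eq:TDTA:commute}, and $P^{\alpha,\beta}_t\Omega_t^{-1}+\Omega_t^{-1}[P^{\beta,\alpha}_t]^{\T}=\bs1_t\bs1_t^{\T}$ — none of which appear in your proposal. Without carrying out (or at least stating) these identities, the inductive step is an assertion rather than a proof; supplying them would make your induction a valid, and arguably more conceptual, alternative to the paper's one-shot verification.
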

\begin{proof}
The proof follows directly once several observations are made.  
Note that \cref{eq:T-inv,eq:noniter:X:mare:shifts} guarantee
\begin{subequations}\label{eq:pf:thm:incorp:mare}
	\begin{gather}
\label{eq:pf:thm:incorp:mare:1}
	V^D_tL^C=T^D_t(\Omega_tJ_t\bs1_t\otimes I_p),
	\quad
	R^CU^A_t=(\bs1^{\T}_tJ_t\otimes I_p)T^A_t,
	\\
\label{eq:pf:thm:incorp:mare:2}
V^D_tD=(\Omega_tJ_t\bs1_t\otimes I_p)R^B-(\Omega_tJ_tP^{\alpha,\beta}_tJ_t\Omega_t^{-1}\otimes I_p)V^D_t,
	\quad
	AU^A_t=L^B(\bs1^{\T}_tJ_t\otimes I_p)-U^A_t(J_t[P^{\beta,\alpha}_t]^{\T}J_t\otimes I_p),
	\end{gather}
\end{subequations}
and
\begin{subequations}\label{eq:pf:thm:incorp:P}
\begin{equation}\label{eq:TDTA:commute}
	\text{
		$\Omega_tJ_tP^{\alpha,\beta}_tJ_t\Omega_t^{-1}\otimes I_p$ and $T^D_t$ commute, while 
		$J_t[P^{\beta,\alpha}_t]^{\T}J_t\otimes I_p$ and $T^A_t$ commute.
	}	
\end{equation}
Besides, it is easy to examine
\begin{equation}\label{eq:pf:thm:incorp:mare:3}
	P^{\alpha,\beta}_t\Omega_t^{-1}+\Omega_t^{-1}[P^{\beta,\alpha}_t]^{\T}=\bs1_t\bs1^{\T}_t.
\end{equation}
\end{subequations}
Hence
\begin{align*}
	\MoveEqLeft[0]
	\op R(X_t)-L^B_{t}R^B_{t}
	\\&=X_tCX_t-X_tD-AX_t+B-L^B_{t}R^B_{t}
	\\&\clue{\cref{eq:noniter:X:arb:shifts}}{=}
	\begin{multlined}[t]
U^A_t(I-T^D_tT^A_t)^{-1}V^D_tL^CR^CU^A_t(I-T^D_tT^A_t)^{-1}V^D_t
\\ -U^A_t(I-T^D_tT^A_t)^{-1}V^D_tD
-AU^A_t(I-T^D_tT^A_t)^{-1}V^D_t+L^BR^B
\\
-\left[L^B-U^A_t(I-T^D_tT^A_t)^{-1}(\Omega_tJ_t\bs1_t\otimes I_p)\right]\left[R^B-(\bs1^{\T}_tJ_t\otimes I_p)(I-T^D_tT^A_t)^{-1}V^D_t\right]
	\end{multlined}
	\\&\clue{\cref{eq:pf:thm:incorp:mare}}{=}
	\begin{multlined}[t]
		U^A_t(I-T^D_tT^A_t)^{-1}T^D_t(\Omega_tJ_t\bs1_t\otimes I_p)(\bs1^{\T}_tJ_t\otimes I_p)T^A_t(I-T^D_tT^A_t)^{-1}V^D_t
\\
-U^A_t(I-T^D_tT^A_t)^{-1}[(\Omega_tJ_t\bs1_t\otimes I_p)R^B-(\Omega_tJ_tP^{\alpha,\beta}_tJ_t\Omega_t^{-1}\otimes I_p)V^D_t]
\\-[L^B(\bs1^{\T}_tJ_t\otimes I_p)-U^A_t(J_t[P^{\beta,\alpha}_t]^{\T}J_t\otimes I_p)](I-T^D_tT^A_t)^{-1}V^D_t
\\
+L^B(\bs1^{\T}_tJ_t\otimes I_p)(I-T^D_tT^A_t)^{-1}V^D_t+U^A_t(I-T^D_tT^A_t)^{-1}(\Omega_tJ_t\bs1_t\otimes I_p)R^B
\\
-U^A_t(I-T^D_tT^A_t)^{-1}(\Omega_tJ_t\bs1_t\otimes I_p)(\bs1^{\T}_tJ_t\otimes I_p)(I-T^D_tT^A_t)^{-1}V^D_t
	\end{multlined}
	\\&=
	\begin{multlined}[t]
		U^A_t(I-T^D_tT^A_t)^{-1}T^D_t(\Omega_tJ_t\bs1_t\bs1^{\T}_tJ_t\otimes I_p)T^A_t(I-T^D_tT^A_t)^{-1}V^D_t
\\
+U^A_t(I-T^D_tT^A_t)^{-1}(\Omega_tJ_tP^{\alpha,\beta}_tJ_t\Omega_t^{-1}\otimes I_p)V^D_t
+U^A_t(J_t[P^{\beta,\alpha}_t]^{\T}J_t\otimes I_p)(I-T^D_tT^A_t)^{-1}V^D_t
\\
-U^A_t(I-T^D_tT^A_t)^{-1}(\Omega_tJ_t\bs1_t\bs1^{\T}_tJ_t\otimes I_p)(I-T^D_tT^A_t)^{-1}V^D_t
	\end{multlined}
	\\&=
	\begin{multlined}[t]
U^A_t(I-T^D_tT^A_t)^{-1}\big[
T^D_t(\Omega_tJ_t\bs1_t\bs1^{\T}_tJ_t\otimes I_p)T^A_t
+(\Omega_tJ_tP^{\alpha,\beta}_tJ_t\Omega_t^{-1}\otimes I_p)(I-T^D_tT^A_t)
\\
+(I-T^D_tT^A_t)(J_t[P^{\beta,\alpha}_t]^{\T}J_t\otimes I_p)
-(\Omega_tJ_t\bs1_t\bs1^{\T}_tJ_t\otimes I_p)
\big](I-T^D_tT^A_t)^{-1}V^D_t
	\end{multlined}
	\\&\clue{\cref{eq:pf:thm:incorp:P}}{=} 
	0
	.
	\qedhere
\end{align*}
\end{proof}

Particularly, when all the shifts $(\alpha_i,\beta_i), i=0,\dots,t-1$ are the same, \cref{thm:incorp:mare} reduces to the factorization of the residual $\op R(X_t)$ for the fixed-point iteration \cref{eq:fixedpoint:mare}.

Motivated by the relation between the RADI method and the underlying Toeplitz structure, as discussed in \cite{guoL2023intrinsic}, we derive the RADI method for NAREs. The idea is to apply the incorporation update $X \leftarrow X + \Delta$ and update $\Delta$ and $\op R(\Delta)$ using \cref{eq:noniter:X:arb:shifts} (or \cref{eq:noniter:X:X:mare}) and \cref{eq:thm:incorp:mare}, respectively, with $t=1$:
\begin{enumerate}[label={P\arabic*.}]
	\item $A_{k,\beta_k}=A_k+ \beta_k I, D_{k,\alpha_k}=D_k+ \alpha_k I$.
	\item 
		$ Y^A_k= R^C A_{k,\beta_k}^{-1}L^B_k, Y^D_k= R^B_k D_{k,\alpha_k}^{-1} L^C$,
	\item 
		$\Upsilon_k=\frac{1}{\alpha_k+\beta_k}(I-Y^D_kY^A_k)$.
	\item $\Delta_k = A_{k,\beta_k}^{-1}L^B_k\Upsilon_k^{-1}R^B_kD_{k,\alpha_k}^{-1}$,
		$X_{k+1}=  X_k+\Delta_k$,
		\item $
			L^B_{k+1}=  L^B_k-A_{k,\beta_k}^{-1}L^B_k\Upsilon_k^{-1},
			R^B_{k+1}=  R^B_k-\Upsilon_k^{-1}R^B_kD_{k,\alpha_k}^{-1}.
			$
		\item $A_{k+1}=  A_k-\Delta_k C, D_{k+1}=  D_k-C\Delta_k $.
\end{enumerate}
The initialization is given by $B = L^B R^B, C = L^C R^C, A_0 = A, D_0 = D, X_0 = 0, L^B_0 = L^B, R^B_0 = R^B$.

It is worth noting that the RADI method generates the same sequence $\set{X_k}$ as \cref{eq:fixedpoint:mare:shifts}, as long as all the shifts are chosen the same. 
Thus, RADI can be viewed as an implementation of \cref{eq:fixedpoint:mare:shifts} that may offers practical numerical advantages. 
In the following, we address several implementation aspects.

\subsection{Implicit update of the square matrices}\label{ssec:implicit-update-of-the-square-matrices}
We next discuss techniques to reduce both storage and computational costs. Recall that $p,q \ll m,n$, and that both $A$ and $D$ are large-scale and sparse. Thanks to the sparse and low-rank structures, it is desirable to avoid updating $A$ and $D$ explicitly during the computation.

Define
\[
\what L^X_k = A_{k,\beta_k}^{-1} L^B_k, \quad \what R^X_k = R^B_k D_{k,\alpha_k}^{-1},
\]
so that
\begin{gather*}
\Delta_k = \what L^X_k \Upsilon_k^{-1} \what R^X_k,\quad
L^B_{k+1} = L^B_k - \what L^X_k \Upsilon_k^{-1},\quad
R^B_{k+1} = R^B_k - \Upsilon_k^{-1} \what R^X_k,\\
A_{k+1} = A_k - \what L^X_k \Upsilon_k^{-1} Y^D_k R^C,\quad
D_{k+1} = D_k - L^C Y^A_k \Upsilon_k^{-1} \what R^X_k.
\end{gather*}

Let
\[
L^{\Phi}_{k+1} = \sum_{i=0}^k \what L^X_i \Upsilon_i^{-1} Y^D_i,\quad
R^{\Phi}_{k+1} = \sum_{i=0}^k Y^A_i \Upsilon_i^{-1} \what R^X_i,
\]
with $L^{\Phi}_0 = 0,\ R^{\Phi}_0 = 0$. It follows that
\[
A_k = A - L^{\Phi}_k R^C,\quad D_k = D - L^C R^{\Phi}_k.
\]

Recalling $A_{0,\beta_k} = A + \beta_k I$, the Sherman–Morrison–Woodbury formula \cref{eq:smwf} helps to derive
\begin{align*}
	Y^A_k=R^CA_{k,\beta_k}^{-1}L^B_k
	 &= R^C(A_{0,\beta_k}- L^{\Phi}_kR^C)^{-1}L^B_k
	\\&\clue{\cref{eq:smwf}}{=} 
	R^C\left(A_{0,\beta_k}^{-1}+A_{0,\beta_k}^{-1} L^{\Phi}_k(I-R^CA_{0,\beta_k}^{-1} L^{\Phi}_k)^{-1}R^CA_{0,\beta_k}^{-1}\right)L^B_k
	\\&= R^CA_{0,\beta_k}^{-1}L^B_k+R^CA_{0,\beta_k}^{-1} L^{\Phi}_k(I-R^CA_{0,\beta_k}^{-1} L^{\Phi}_k)^{-1}R^CA_{0,\beta_k}^{-1}L^B_k
	\\&= (I-R^C\ul{A_{0,\beta_k}^{-1} L^{\Phi}_k})^{-1}R^C\ul{A_{0,\beta_k}^{-1}L^B_k}, 
\end{align*}
where the main computational cost lies in solving the linear systems $A_{0,\beta_k}^{-1}L_k^B$ and $A_{0,\beta_k}^{-1} L^{\Phi}_k$. 
(It seems good to solve $R^CA_{0,\beta_k}^{-1}$ that reduces the computational cost, but the underlined terms are required explicitly below.)
Also,
\begin{align*}
	A_{k,\beta_k}^{-1}L^B_k
	 &= (A_{0,\beta_k}- L^{\Phi}_kR^C)^{-1}L^B_k
	\\&\clue{\cref{eq:smwf}}{=} 
	\left(A_{0,\beta_k}^{-1}+A_{0,\beta_k}^{-1} L^{\Phi}_k(I-R^CA_{0,\beta_k}^{-1} L^{\Phi}_k)^{-1}R^CA_{0,\beta_k}^{-1}\right)L^B_k
	\\&= \ul{A_{0,\beta_k}^{-1}L^B_k}+\ul{A_{0,\beta_k}^{-1} L^{\Phi}_k}Y^A_k,
\end{align*}
consisting of three terms already in hand.
Analogous formulas hold for $Y^D_k$ and $R^B_k D_{k,\alpha_k}^{-1}$:
\begin{align*}
Y^D_k &= R^B_k D_{0,\alpha_k}^{-1} L^C (I - R^{\Phi}_k D_{0,\alpha_k}^{-1} L^C)^{-1},\\
R^B_k D_{k,\alpha_k}^{-1} &= R^B_k D_{0,\alpha_k}^{-1} + Y^D_k R^{\Phi}_k D_{0,\alpha_k}^{-1},
\end{align*}
where $D_{0,\alpha_k} = D + \alpha_k I$.

\subsection{Real arithmetics with complex shifts}\label{ssec:real-arithmetics-with-complex-shifts}
Even when all matrices $A, B, C$, and $D$ are real, 
it is often natural and beneficial to employ a complex shift $(\alpha_k, \beta_k)$ in the iteration,
as complex shifts would significantly accelerate convergence. 
However, to ensure efficiency, the use of complex arithmetic should be minimized, 
since one complex addition requires two real additions, and one complex multiplication requires four real multiplications and two real additions (or three real multiplications and five real additions).  
To achieve this reduction, after applying a nonreal shift $(\alpha_k,\beta_k)$, 
we immediately use its conjugate $(\ol\alpha_k,\ol\beta_k)$ in the subsequent step, 
and aim to compute all quantities in the $(k+2)$-th iteration from those in the $k$-th iteration while avoiding complex arithmetic as much as possible. 
In fact, the procedure can be reformulated so that no complex arithmetic is needed at all.

Specifically, suppose $L^B_k, R^B_k$, and $X_k$ are all real, and consider two consecutive steps of the fixed-point iteration \cref{eq:fixedpoint:mare:shifts}
with shifts $(\alpha_k, \beta_k)$ and $(\ol\alpha_k,\ol\beta_k)$ applied to
\[
	\Delta C \Delta - \Delta D_k -A_k
	\Delta + L_k^BR_k^B=0, 
\]
with initial $\Delta_0=0$. 
It follows from \cref{thm:fixedpoint:mare-toeplitz:shifts,thm:incorp:mare} that 
\begin{align*}
	\Delta_2=X_{k+2}-X_k&=U^A_2(I-T^D_2T^A_2)^{-1}V^D_2,\\
	L^B_{k+2}-L^B_k&=-U^A_2(I-T^D_2T^A_2)^{-1}(\Omega_2J_2\bs1_2\otimes I_p),\\
	R^B_{k+2}-R^B_k&=-(\bs1_2^{\T}J_2\otimes I_p)(I-T^D_2T^A_2)^{-1}V^D_2,
\end{align*}
where
\begin{align*}
	U^A_2&=
	\begin{bmatrix}
		(A_k+\ol \beta_k I)^{-1}L_k^B &(A_k+\beta_k I)^{-1} (A_k+\ol \beta_k I)^{-1}(\ol \alpha_k I -A_k)L_k^B
	\end{bmatrix}
	,\\
	T^D_2&=\begin{bmatrix}
		R_k^B(D_k+\ol \alpha_k I)^{-1}L^C &0 \\
		(\alpha_k+\beta_k)R_k^B(D_k+\ol \alpha_k I)^{-1}(D_k+ \alpha_k I)^{-1}L^C & R_k^B(D_k+ \alpha_k I)^{-1}L^C
	\end{bmatrix}
	,\\
	T^A_2&=\begin{bmatrix}
		R^C(A_k+\ol \beta_k I)^{-1}L_k^B &
		(\ol \alpha_k+\ol \beta_k)R^C(A_k+\ol \beta_k I)^{-1}(A_k+ \beta_k I)^{-1}L_k^B \\
		0	& R^C(A_k+ \beta_k I)^{-1}L_k^B
	\end{bmatrix}
	,\\
	V^D_2&=
	\begin{bmatrix}
		(\ol \alpha_k+\ol \beta_k)R_k^B(D_k+\ol \alpha_k I)^{-1}
		\\
		(\alpha_k+\beta_k)R_k^B(D_k+ \alpha_k I)^{-1}(D_k+\ol \alpha_k I)^{-1}(\ol \beta_k I -D_k)
	\end{bmatrix}.
\end{align*}
Temporarily write $(A_k+\beta_k I)^{-1}L_k^B=\what L^B_{\Re}+\ii \what L^B_{\Im}$ with real $\what L^B_{\Re}, \what L^B_{\Im}$.  
Since $Z:=(A_k+\beta_k I)^{-1}(A_k+\ol \beta_k I)^{-1}L_k^B$ is real,
the relation $(A_k+\ol \beta_k I)Z = \what L^B_{\Re}+\ii \what L^B_{\Im}$ implies
$Z=-\frac{1}{\Im\beta_k}\what L^B_{\Im}$.  
Hence
\begin{align*}
	U^A_2
	&=\begin{bmatrix}
		\what L^B_{\Re}-\ii \what L^B_{\Im}&(A_k+\beta_k I)^{-1}\big((\ol \alpha_k+\ol \beta_k)(A_k+\ol \beta_k I)^{-1}-I\big)L_k^B
	\end{bmatrix} \\&=
	\begin{bmatrix}
		\what L^B_{\Re}-\ii \what L^B_{\Im}&  - \frac{\ol \alpha_k+\ol \beta_k}{\Im\beta_k}\what L^B_{\Im}-\what L^B_{\Re}-\ii \what L^B_{\Im}
	\end{bmatrix} \\&=
	\begin{bmatrix}
		\what L^B_{\Re} & \what L^B_{\Im}
	\end{bmatrix} (\Psi^A\otimes I_p),
\end{align*}
where 
$
	\Psi^A=\begin{bmatrix}
	1 & -1 \\ -\ii  & -\ii - \tfrac{\ol \alpha_k+\ol \beta_k}{\Im\beta_k}
	\end{bmatrix}.
	$
	Letting
	$\Theta^A=\begin{bmatrix}
	-\ii &-\frac{\ol \alpha_k+\ol \beta_k}{\Im\beta_k}\\
	& \ii\\
	\end{bmatrix}$, 
\begin{align*}
	T^A_2
	&=\begin{bmatrix}
		R^C(\what L^B_{\Re}-\ii \what L^B_{\Im}) & -\frac{\ol \alpha_k+\ol \beta_k}{\Im\beta_k}R^C \what L^B_{\Im}
		\\
		0 & R^C(\what L^B_{\Re}+\ii \what L^B_{\Im})
	\end{bmatrix}
	=I_2\otimes R^C\what L^B_{\Re}+\Theta^A\otimes R^C\what L^B_{\Im}
	.
\end{align*}
Similarly, letting $\Omega_2=\begin{bmatrix}
	\ol \alpha_k+\ol \beta_k&\\&\alpha_k+\beta_k
\end{bmatrix}
,\Psi^D=\begin{bmatrix}
		1 & -\ii \\ -1  & -\ii - \frac{\ol \alpha_k+\ol \beta_k}{\Im\alpha_k}
\end{bmatrix},\Theta^D=\begin{bmatrix}
	-\ii &\\
	-\frac{\alpha_k+\beta_k}{\Im\alpha_k}& \ii\\
	\end{bmatrix}$,
and $R_k^B(D_k+\alpha_k I)^{-1} = \what R^B_{\Re}+\ii \what R^B_{\Im}$, 
\[
	V^D_2=
	(\Omega_2\Psi^D\otimes I_p)\begin{bmatrix}
			\what R^B_{\Re} \\ \what R^B_{\Im}
		\end{bmatrix}
		,
		\qquad
		T^D_2=
	I_2\otimes \what R^B_{\Re}L^C+\Theta^D\otimes \what R^B_{\Im}L^C
	.
\]
Hence $\Delta_2=\begin{bmatrix}
		\what L^B_{\Re} & \what L^B_{\Im}
		\end{bmatrix}\Upsilon_2^{-1}\begin{bmatrix}
	\what R^B_{\Re}\\ \what R^B_{\Im}
	\end{bmatrix}$ where
\begin{align*}
	\Upsilon_2
	&=
(\Omega_2\Psi^D\otimes I_p)^{-1}\left(I-[I_2\otimes \what R^B_{\Re}L^C+\Theta^D\otimes \what R^B_{\Im}L^C][I_2\otimes R^C\what L^B_{\Re}+\Theta^A\otimes R^C\what L^B_{\Im}]\right)(\Psi^A\otimes I_p)^{-1}
	\\&=
	\begin{multlined}[t]
	(\Psi^A\Omega_2\Psi^D\otimes I_p)^{-1}-(\Psi^A\Omega_2\Psi^D)^{-1}\otimes \what R^B_{\Re}L^CR^C\what L^B_{\Re}-(\Omega_2\Psi^D)^{-1}\Theta^D(\Psi^A)^{-1}\otimes \what R^B_{\Im}L^CR^C\what L^B_{\Re}
	\\-(\Omega_2\Psi^D)^{-1}\Theta^A(\Psi^A)^{-1}\otimes \what R^B_{\Re}L^CR^C\what L^B_{\Im}-(\Omega_2\Psi^D)^{-1}\Theta^D\Theta^A(\Psi^A)^{-1}\otimes \what R^B_{\Im}L^C R^C\what L^B_{\Im}
	\end{multlined}
	\intertext{using $\Psi^A\Theta^A(\Psi^A)^{-1}=-(\Omega_2\Psi^D)^{-1}\Theta^D\Omega_2\Psi^D=\begin{bmatrix}
	0 & 1 \\ -1 & 0
\end{bmatrix}=:\what I_2$,}
	&=
	\begin{multlined}[t]
	(\Psi^A\Omega_2\Psi^D\otimes I_p)^{-1}-(\Psi^A\Omega_2\Psi^D)^{-1}\otimes \what R^B_{\Re}L^CR^C\what L^B_{\Re}+\what I_2(\Psi^A\Omega_2\Psi^D)^{-1}\otimes \what R^B_{\Im}L^CR^C\what L^B_{\Re}
	\\-(\Psi^A\Omega_2\Psi^D)^{-1}\what I_2\otimes \what R^B_{\Re}L^CR^C\what L^B_{\Im}+\what I_2(\Psi^A\Omega_2\Psi^D)^{-1}\what I_2\otimes \what R^B_{\Im}L^C R^C\what L^B_{\Im}
	\end{multlined}
	\\&=
(\Psi^A\Omega_2\Psi^D\otimes I_p)^{-1}-[I_2\otimes \what R^B_{\Re}L^C-\what I_2\otimes \what R^B_{\Im}L^C](\Psi^A\Omega_2\Psi^D\otimes I_q)^{-1}[I_2\otimes R^C\what L^B_{\Re}+\what I_2\otimes R^C\what L^B_{\Im}]
	\\&=
(\Psi^A\Omega_2\Psi^D)^{-1}\otimes I_p-\begin{bmatrix}
			\what R^B_{\Re}L^C & -\what R^B_{\Im}L^C \\ \what R^B_{\Im}L^C & \what R^B_{\Re}L^C
		\end{bmatrix}(\Psi^A\Omega_2\Psi^D)^{-1}\otimes I_q\begin{bmatrix}
			R^C\what L^B_{\Re} & R^C\what L^B_{\Im} \\ -R^C\what L^B_{\Im} & R^C\what L^B_{\Re}
		\end{bmatrix}
		.
\end{align*}
Here
\begin{align*}
	(\Psi^A\Omega_2\Psi^D)^{-1}
	&=\begin{bmatrix}
		2\Re(\alpha_k+\beta_k) & -2\Im(\alpha_k+\beta_k)+\frac{\abs{\alpha_k+\beta_k}^2}{\Im\alpha_k}\\
		-2\Im(\alpha_k+\beta_k)+\frac{\abs{\alpha_k+\beta_k}^2}{\Im\beta_k} & -2\Re(\alpha_k+\beta_k)+\frac{\abs{\alpha_k+\beta_k}^2\Re(\alpha_k+\beta_k)}{\Im\alpha_k\Im\beta_k}\\
	\end{bmatrix}^{-1}
	\\&=\frac{\Im\alpha_k\Im\beta_k}{\abs{\alpha_k+\beta_k}^2-4\Im\alpha_k\Im\beta_k}\begin{bmatrix}
		-\frac{2\Re(\alpha_k+\beta_k)}{\abs{\alpha_k+\beta_k}^2}+\frac{\Re(\alpha_k+\beta_k)}{\Im\alpha_k\Im\beta_k}& \frac{2\Im(\alpha_k+\beta_k)}{\abs{\alpha_k+\beta_k}^2}-\frac{1}{\Im\alpha_k}\\
		\frac{2\Im(\alpha_k+\beta_k)}{\abs{\alpha_k+\beta_k}^2}-\frac{1}{\Im\beta_k} & \frac{2\Re(\alpha_k+\beta_k)}{\abs{\alpha_k+\beta_k}^2}\\
	\end{bmatrix}
	.
\end{align*}
On the other hand,
\begin{align*}
	L^B_{k+2}-L^B_k&=-\begin{bmatrix}
			\what L^B_{\Re} & \what L^B_{\Im}
		\end{bmatrix} \Upsilon_2^{-1}((\Omega_2\Psi^D)^{-1}\Omega_2J_2\bs1_2\otimes I_p)
	=-\begin{bmatrix}
			\what L^B_{\Re} & \what L^B_{\Im}
		\end{bmatrix} \Upsilon_2^{-1}(e_1\otimes I_p)
	,
	\\
	R^B_{k+2}-R^B_k&=-(\bs 1_2^{\T}J_2(\Psi^A)^{-1}\otimes I_p)\Upsilon_2^{-1}\begin{bmatrix}
		\what R^B_{\Re}\\\what R^B_{\Im}
	\end{bmatrix}=-(e_1^{\T}\otimes I_p)\Upsilon_2^{-1}\begin{bmatrix}
		\what R^B_{\Re}\\\what R^B_{\Im}
	\end{bmatrix}
		, \qquad\qquad\text{where}\quad e_1=\begin{bmatrix}
			1\\0
		\end{bmatrix}
	.
\end{align*}

Again the Sherman-Morrison-Woodbury formula \cref{eq:smwf} could be used to update $A_k,D_k$ implicitly.
In detail,
$\what L^B_{\Re}, \what L^B_{\Im}$ is the solution to the real linear system 
\[
	\begin{bmatrix}
		A_k+\Re\beta_kI & - \Im\beta_k I\\
		\Im\beta_k I & A_k+\Re\beta_kI
	\end{bmatrix}\begin{bmatrix}
		\what L^B_{\Re}\\ \what L^B_{\Im}
	\end{bmatrix}
	=\begin{bmatrix}
		L_k^B\\
		0
	\end{bmatrix},
\]
and then
\begin{align*}
	\begin{bmatrix}
		R^C\what L^B_{\Re}\\ R^C\what L^B_{\Im}
	\end{bmatrix}
	&=\begin{bmatrix}
		R^C & \\ & R^C
	\end{bmatrix}\left(\underbrace{\begin{bmatrix}
		A_0+\Re\beta_k I & - \Im\beta_k I\\
		\Im\beta_k I & A_0+\Re\beta_kI
	\end{bmatrix}}_{=:\mathcal{A}}-\begin{bmatrix}
	 L^{\Phi}_kR^C & \\ & L^{\Phi}_kR^C 
\end{bmatrix}\right)^{-1}\begin{bmatrix}
		L_k^B\\ 0
	\end{bmatrix}
	\\&\clue{\cref{eq:smwf}}{=} 
	(I_2\otimes R^C)\left(
		\mathcal{A}^{-1}+\mathcal{A}^{-1}(I_2\otimes  L^{\Phi}_k)
		\left[I-(I_2\otimes R^C)\mathcal{A}^{-1}(I_2\otimes  L^{\Phi}_k)\right]^{-1}
		(I_2\otimes R^C)\mathcal{A}^{-1}\right)\begin{bmatrix}
		L_k^B\\ 0
	\end{bmatrix}
	\\&=
	\left(I-(I_2\otimes R^C)\mathcal{A}^{-1}(I_2\otimes  L^{\Phi}_k)\right)^{-1}
		(I_2\otimes R^C)\mathcal{A}^{-1}\begin{bmatrix}
		L_k^B\\ 0
	\end{bmatrix}
	,
	\\
	\begin{bmatrix}
		\what L^B_{\Re}\\ \what L^B_{\Im}
	\end{bmatrix}
	&=\left(\mathcal{A}-(I_2\otimes  L^{\Phi}_kR^C)\right)^{-1}\begin{bmatrix}
		L_k^B\\ 0
	\end{bmatrix}
	\\&\clue{\cref{eq:smwf}}{=} 
	\left(
		\mathcal{A}^{-1}+\mathcal{A}^{-1}(I_2\otimes  L^{\Phi}_k)
		\left[I-(I_2\otimes R^C)\mathcal{A}^{-1}(I_2\otimes  L^{\Phi}_k)\right]^{-1}
		(I_2\otimes R^C)\mathcal{A}^{-1}\right)\begin{bmatrix}
		L_k^B\\ 0
	\end{bmatrix}
	\\&=
		\mathcal{A}^{-1}\begin{bmatrix}
		L_k^B\\ 0
	\end{bmatrix}+\mathcal{A}^{-1}(I_2\otimes  L^{\Phi}_k)\begin{bmatrix}
		R^C\what L^B_{\Re}\\ R^C\what L^B_{\Im}
	\end{bmatrix}
	,
\end{align*}
in which $\mathcal{A}^{-1}(I_2\otimes  L^{\Phi}_k)$ and $\mathcal{A}^{-1}\begin{bmatrix}
	L_k^B \\ 0
\end{bmatrix}$ are needed to be solved.
In fact, it suffices to solve $\mathcal{A}\begin{bmatrix}
	\what  L^{\Phi}_{\Re} \\ \what  L^{\Phi}_{\Im}
\end{bmatrix}=\begin{bmatrix}
	 L^{\Phi}_k\\ 0
\end{bmatrix}$, because $\mathcal{A}\begin{bmatrix}
	-\what  L^{\Phi}_{\Im} \\ \what  L^{\Phi}_{\Re}
\end{bmatrix}=\begin{bmatrix}
	0\\  L^{\Phi}_k
\end{bmatrix}$.
And $\what R^B_{\Re},\what R^B_{\Im}$ and $\what R^B_{\Re}L^C, \what R^B_{\Im}L^C$ can be solved in the same manner:
\begin{align*}
	\begin{bmatrix}
		\what R^B_{\Re}L^C & \what R^B_{\Im}L^C
	\end{bmatrix}&=
	\begin{bmatrix}
		R_k^B & 0
	\end{bmatrix}\mathcal{D}^{-1}(I_2\otimes L^C)
	\left(I-(I_2\otimes  R^{\Phi}_k)\mathcal{D}^{-1}(I_2\otimes L^C)\right)^{-1},
	\\
	\begin{bmatrix}
		\what R^B_{\Re} & \what R^B_{\Im}
	\end{bmatrix}
	&=\begin{bmatrix}
		R_k^B & 0
		\end{bmatrix}\mathcal{D}^{-1} + \begin{bmatrix}
		\what R^B_{\Re}L^C & \what R^B_{\Im}L^C
	\end{bmatrix}(I_2\otimes  R^{\Phi}_k)\mathcal{D}^{-1},
\end{align*}
where $\mathcal{D}=\begin{bmatrix}
	D_0+\Re \alpha_k I & \Im \alpha_k I \\ -\Im \alpha_k I & D_0+\Re \alpha_k I
\end{bmatrix}$.  

Thus, through careful pairing of conjugate shifts and algebraic manipulation, 
all updates can be expressed entirely in terms of real quantities. 
This ensures that the iteration can fully exploit the convergence benefits of complex shifts while avoiding the computational overhead of complex arithmetic.  

\subsection{Other issues}\label{ssec:other-issues}
Each iteration step implicitly involves verifying the stopping criterion at the end and selecting a shift $(\alpha, \beta)$ at the beginning.
To verify the stopping criterion,
Then the approximate accuracy is measured by
\[
\nu_k
=\frac{\N{L^B_k R^B_k}_{\F}}{\N{B}_{\F}}
=\sqrt{\frac{\trace((L_k^B)^{\T}L_k^BR_k^B(R_k^B)^{\T})}{\trace((L^B)^{\T}L^BR^B(R^B)^{\T})}},
\]
which is easy to compute.

It is fair to say that the choice of shifts $(\alpha,\beta)$ essentially determines the convergence of the proposed method. 
Recall the discussion on global convergence at the end of \cref{ssec:different-shifts}.
In the $k$-th iteration, since the spectrum $\lambda(\mathcal{H}_k)$ is unknown,
we employ the projected spectrum $\lambda(\operatorname{proj}_{\Pi_L,\Pi_R}(\mathcal{H}_k))$ instead, while preserving certain structure. Here
\[
\mathcal{H}_k=\begin{bmatrix}
	D_k & -C \\
	 L^B_kR^B_k  & -A_k
\end{bmatrix}, \qquad
\operatorname{proj}_{\Pi_L,\Pi_R}(\mathcal{H}_k)=\begin{bmatrix}
	\Pi_RD_k\Pi_R^{\T} & -\Pi_RC\Pi_L \\
	\Pi_L^{\T}L^B_kR^B_k   \Pi_R^{\T} & -\Pi_L^{\T}A_k\Pi_L
\end{bmatrix},
\]
and $\Pi_L$ and $\Pi_R^{\T}$ are orthonormal bases of the subspaces 
$\subspan\set{\what  L^X_{k-1},\dots,\what  L^X_{k-s}}$ and 
$\subspan\set{(\what  R^X_{k-1})^{\T},\dots,(\what  R^X_{k-s})^{\T}}$, respectively, for a prescribed integer $s$.
Then the projected NARE is
\[
	\breve{X}\Pi_RC\Pi_L\breve{X}-\breve{X}\Pi_RD_k\Pi_R^{\T}- \Pi_L^{\T}A_k\Pi_L\breve{X}+\Pi_L^{\T}L^B_kR^B_k \Pi_R^{\T} =0.
\]
This leads to a shift-selection strategy that we refer to as \emph{generalized Leja shifts}: 
once no shifts remain, we compute $\lambda(\operatorname{proj}_{\Pi_L,\Pi_R}(\mathcal{H}_k))$ consisting of $2ps$ eigenvalues and then generate a sequence of generalized Leja points, which are used as shifts in subsequent iterations.

Another option is the so-called residual Hamiltonian shift \cite{bennerBKS2018radi} (or $H$-shifts \cite{bennerKS2016lowrank}).
The procedure is as follows:
compute all eigenpairs $\left(\lambda,\begin{bmatrix}
	\breve{r}\\ \breve{q}
\end{bmatrix}\right)$ of the associated Hamiltonian matrix 
$\operatorname{proj}_{\Pi_L,\Pi_R}(\mathcal{H}_k)$ with $\N{\breve{r}}^2+\N{\breve{q}}^2=1$,
and then define $-\alpha_0,\dots,-\alpha_k$ as the stable eigenvalues $\lambda$ sorted by decreasing $\N{\breve{q}}$, 
while $\beta_0,\dots,\beta_k$ are the anti-stable eigenvalues sorted by increasing $\N{\breve{q}}$.

In particular, if $A_k, B_k, C, D_k$, and $\Pi_L,\Pi_R$ are real, then $\lambda(\operatorname{proj}_{\Pi_L,\Pi_R}(\mathcal{H}_k))$ (and hence the generalized Leja points) are closed under complex conjugation. 
Thus, the strategy in \cref{ssec:real-arithmetics-with-complex-shifts}, where $(\ol\alpha_k,\ol\beta_k)$ follows the nonreal shift $(\alpha_k,\beta_k)$, coincides with the generalized Leja shifts.

Note that shifts generated at one stage may be reused in multiple iterations.
Since the projected spectrum would not provide accurate approximations, it would be preferable to select only one shift per iteration.
The newly generated shifts tend to be better since the projection is refined progressively.
By the pair $(s, s')$ we denote the generalized Leja shift strategy where $ps$ pairs of shifts are generated but only $s'$ (or $s'+1$ if the $s'$-th is nonreal) are subsequently employed.

\bigskip\noindent
Taking all the above details into account, we propose a practical method for NAREs, summarized in \cref{alg:RADI:mare}.

\begin{algorithm}[hp]
	\caption{An RADI-type method for the NARE \cref{eq:mare}}\label{alg:RADI:mare}
	\small
	\begin{algorithmic}[1]
		\REQUIRE $A,D,B=L^BR^B,C=L^CR^C$ and a strategy to generate shifts.
		\STATE $ L^{\Phi}=0_{m\times q}, R^{\Phi} =0_{q\times n}, M=I_m,N=I_n$.
		\STATE $L^X=[\;], R^X =[\;]$, $\nu_0=\N{L^BR^B}_{\F}$.
		\WHILE{$\N{L^BR^B}_{\F}>\nu_0\varepsilon $ (stopping criterion)}
		\STATE Generate a proper shift $(\alpha,\beta)$.
		\IF{$(\alpha,\beta)\notin \mathbb{R}^2$ and complex arith.\ to be avoided}
		\STATE $\begin{bmatrix}
			\what L^B_{\Re} & \what  L^{\Phi}_{\Re}\\
			\what L^B _{\Im}& \what  L^{\Phi}_{\Im}\\
		\end{bmatrix}=\begin{bmatrix}
		A+\Re\beta M & - \Im\beta M\\
		\Im\beta M & A+\Re\beta M
	\end{bmatrix}^{-1}\begin{bmatrix}
	L^B &  L^{\Phi} \\ 0 & 0
		\end{bmatrix}$.
		\STATE  $\begin{bmatrix}
		Y^A_{\Re} \\ Y^A_{\Im} \\
		\end{bmatrix}=\left(I_{2q}-\begin{bmatrix}
		R^C\what L^{\Phi}_{\Re}& -R^C\what L^{\Phi}_{\Im}\\
		R^C\what L^{\Phi}_{\Im}& R^C\what L^{\Phi}_{\Re}
		\end{bmatrix}\right)^{-1}\begin{bmatrix}
			R^C\what L^B_{\Re} \\
			R^C\what L^B _{\Im}\\
		\end{bmatrix}$. 
		\STATE $\begin{bmatrix}
			\what R^B_{\Re} & \what R^B _{\Im}\\
			\what  R^{\Phi}_{\Re}& \what  R^{\Phi}_{\Im}\\
		\end{bmatrix}=\begin{bmatrix}
	R^B & 0 \\  R^{\Phi}  & 0
		\end{bmatrix}\begin{bmatrix}
		D+\Re\alpha N & \Im\alpha N\\
		- \Im\alpha N & D+\Re\alpha N
	\end{bmatrix}^{-1}$.
	\STATE $\begin{bmatrix}
	Y^D_{\Re} & Y^D_{\Im} \\
		\end{bmatrix}=\begin{bmatrix}
		\what R^B_{\Re}L^C &
			\what R^B _{\Im}L^C
			\end{bmatrix}\left(I_{2q}-\begin{bmatrix}
		\what R^{\Phi}_{\Re}L^C& \what R^{\Phi}_{\Im}L^C\\
		-\what R^{\Phi}_{\Im}L^C&  \what R^{\Phi}_{\Re}L^C
		\end{bmatrix}\right)^{-1}$. 
		\STATE (LU f.) $L_\Upsilon R_\Upsilon=
		\Psi\otimes I_p-\begin{bmatrix}
			Y^D_{\Re} & -Y^D_{\Im} \\ Y^D_{\Im} & Y^D_{\Re}
		\end{bmatrix}\Psi\otimes I_q\begin{bmatrix}
			Y^A_{\Re} & Y^A_{\Im} \\ -Y^A_{\Im} & Y^A_{\Re}
		\end{bmatrix}
$,\\ where $\Psi= \frac{\Im\alpha\Im\beta}{\abs{\alpha+\beta}^2-4\Im\alpha\Im\beta}\begin{bmatrix}
		-\frac{2\Re(\alpha+\beta)}{\abs{\alpha+\beta}^2}+\frac{\Re(\alpha+\beta)}{\Im\alpha\Im\beta}& \frac{2\Im(\alpha+\beta)}{\abs{\alpha+\beta}^2}-\frac{1}{\Im\alpha}\\
		\frac{2\Im(\alpha+\beta)}{\abs{\alpha+\beta}^2}-\frac{1}{\Im\beta} & \frac{2\Re(\alpha+\beta)}{\abs{\alpha+\beta}^2}\\
	\end{bmatrix}$.
\STATE
$\what L^X= \left(\begin{bmatrix}
	\what L^B_{\Re}& \what L^B_{\Im}
\end{bmatrix}+\begin{bmatrix}
\what L^{\Phi}_{\Re} & \what L^{\Phi}_{\Im}
		\end{bmatrix}\begin{bmatrix}
		Y^A_{\Re}& Y^A_{\Im}\\
		-Y^A_{\Im}& Y^A_{\Re}
		\end{bmatrix}\right)R_\Upsilon^{-1}, L^X\leftarrow\begin{bmatrix}
			 L^X & \what L^X
	\end{bmatrix}$.
		\STATE $\what R^X = L_\Upsilon^{-1}\left(\begin{bmatrix}
				\what R^B_{\Re}\\
				\what R^B_{\Im}
		\end{bmatrix}+\begin{bmatrix}
		Y^D_{\Re}& -Y^D_{\Im}\\
		Y^D_{\Im}& Y^D_{\Re}
		\end{bmatrix}\begin{bmatrix}
		\what R^{\Phi}_{\Re}\\
			\what R^{\Phi}_{\Im}
		\end{bmatrix}\right), R^X \leftarrow\begin{bmatrix}
			 R^X  \\ \what R^X 
	\end{bmatrix}$.
	\STATE 
		$\begin{bmatrix}
			L^B& L^{\Phi}
		\end{bmatrix}
		\leftarrow  \begin{bmatrix}
			L^B& L^{\Phi} 
			\end{bmatrix}+M\what L^X L_\Upsilon^{-1}
				\begin{bmatrix}
					-I_p &Y^D_{\Re}\\0 &Y^D_{\Im}
		\end{bmatrix}$.
		\STATE $
		\begin{bmatrix}
			R^B\\ R^{\Phi} 
		\end{bmatrix}\leftarrow \begin{bmatrix}
			R^B\\ R^{\Phi} 
		\end{bmatrix}+
			\begin{bmatrix}
				-I_p & 0 \\
			Y^A_{\Re} & Y^A_{\Im}
			\end{bmatrix}R_\Upsilon^{-1}\what R^XN
			$.
		\ELSE
		\STATE $\begin{bmatrix}
			\what L^B & \what  L^{\Phi}
			\end{bmatrix}=(A+\beta M)^{-1}\begin{bmatrix}
		L^B &  L^{\Phi}
		\end{bmatrix}$.
		\STATE $Y^A=(I_q-R^C\what  L^{\Phi})^{-1}R^C\what L^B$. 
		\STATE $\begin{bmatrix}
			\what R^B \\ \what  R^{\Phi} 
		\end{bmatrix}=\begin{bmatrix}
		R^B \\  R^{\Phi} 
		\end{bmatrix}(D+\alpha N)^{-1}$.
		\STATE $Y^D=\what R^BL^C(I_q-\what R^{\Phi} L^C)^{-1}$. 
		\STATE (LU f.) $L_\Upsilon R_\Upsilon=
		\frac{1}{\alpha+\beta}(I_p-Y^DY^A)$.
\STATE
$\what L^X= (\what L^B+\what L^{\Phi} Y^A)R_\Upsilon^{-1}, L^X\leftarrow\begin{bmatrix}
			 L^X & \what L^X
	\end{bmatrix}$.
		\STATE $\what R^X = L_\Upsilon^{-1}(\what R^B+Y^D\what R^{\Phi} ), R^X \leftarrow\begin{bmatrix}
			 R^X  \\ \what R^X 
	\end{bmatrix}$.
	\STATE 
		$\begin{bmatrix}
			L^B& L^{\Phi}
		\end{bmatrix}
		\leftarrow  \begin{bmatrix}
			L^B& L^{\Phi}
		\end{bmatrix}+M\what L^X L_\Upsilon^{-1}\begin{bmatrix}
			-I_p& Y^D
		\end{bmatrix}$.
		\STATE $
		\begin{bmatrix}
			R^B\\ R^{\Phi} 
		\end{bmatrix}\leftarrow \begin{bmatrix}
			R^B\\ R^{\Phi} 
		\end{bmatrix}+\begin{bmatrix}
			-I_p \\ Y^A
		\end{bmatrix}R_\Upsilon^{-1}\what R^XN
			$.
		\ENDIF
		\ENDWHILE
		\ENSURE $ L^X, R^X $ satisfying $X_\star\approx  L^X R^X $.
	\end{algorithmic}
\end{algorithm}

\subsection{Generalized and special NAREs}\label{ssec:generalized-nare}

\paragraph{Generalized NARE}\label{para:generalized-nare}
All the discussions above remain valid with minor modifications 
for the generalized NARE
\[
	\op R(X):=MXCXN-MXD-AXN+B=0,
\]
which reduces to an (ordinary) NARE if $M,N$ are nonsingular:
\[
	XCX-XD'-A'X+B'=0,
\]
where $D'=DN^{-1}, A'=M^{-1}A, B'=M^{-1}BN^{-1}$.
Thus,
\Cref{alg:RADI:mare} can also be applied to the generalized NARE, with the given $M,N$ in place of $M=I_m,N=I_n$ in Line~1.

\paragraph{NARE with a strengthened structure}\label{para:structure-strengthened-nare}

In some applications, $A,D$ take the special form
\[
	A=A'-L^\Phi R^C, D=D'-L^C R^\Phi,
\]
where $L^\Phi\in \mathbb{R}^{m\times q},R^\Phi\in \mathbb{R}^{q\times n}$.
We refer to this as an NARE with a Strengthened structure (NARE/S). 
Rewriting $A=A'_1=A'-L^\Phi_0R^C,D=D'_1=D'-L^CR^\Phi_0$ with $L^\Phi_0=L^\Phi,R^\Phi_0=R^\Phi$,
we can see that the calculation process on this NARE/S coincides with that on
\[
	XL^CR^CX-XD'-A'X+L^BR^B=0.
\]
Therefore, \Cref{alg:RADI:mare} is applicable to the NARE/S as well, with the given $L^\Phi,R^\Phi$ initialized in Line~1 instead of zeros.

\paragraph{NARE with a wealy-strengthened structure}\label{para:structure-weakly-trengthened-nare}

Furthermore, if $A,D$ are of the special form
\[
	A=A'-L^\Phi R^C-L^A R^A, D=D'-L^C R^\Phi-L^DR^D,
\]
which we call an NARE with a Weakly-strengthened structure (NARE/W),
then we deal with the following NARE
\begin{equation}\label{eq:mare:specialA}
	X\begin{bmatrix}
			L^C & L^D
		\end{bmatrix}\begin{bmatrix}
		I & \\ & 0
		\end{bmatrix}\begin{bmatrix}
			R^C\\ R^A
		\end{bmatrix}X-X\left(D'-\begin{bmatrix}
		L^C & L^D
		\end{bmatrix}\begin{bmatrix}
			R^\Phi\\ R^D
		\end{bmatrix}\right)-\left(A'-\begin{bmatrix}
		L^\Phi & L^A
	\end{bmatrix}\begin{bmatrix}
		R^C \\ R^A
	\end{bmatrix}\right)X+L^BR^B=0,
\end{equation}
which is essentially the same as the NARE/S.
We present a modified fragment of the algorithm to illustrate the iteration for real shifts.
Specifically, Lines~16--24 of \cref{alg:RADI:mare} are replaced by Lines~16${}'$--24${}'$ of \cref{alg:RADI:mare:specialA}.
The iteration for complex shifts is similar and therefore omitted.

\begin{algorithm}[ht]
	\caption{Part of the RADI-type method for the NARE/W \cref{eq:mare:specialA}}\label{alg:RADI:mare:specialA}
	\small
	\begin{algorithmic}[1]
		\makeatletter
		\algsetup{linenodelimiter=${}'$:}
		\setcounter{ALC@line}{15}
		\makeatother
		\STATE $\begin{bmatrix}
			\what L^B & \what  L^{\Phi} & \what L^A
			\end{bmatrix}=(A+\beta M)^{-1}\begin{bmatrix}
			L^B &  L^{\Phi} & L^A
		\end{bmatrix}$.
		\STATE $Y^A=\left(
			I-\begin{bmatrix}
				R^C\what  L^{\Phi} & R^C\what  L^A\\
				R^A\what  L^{\Phi} & R^A\what  L^A\\
		\end{bmatrix}\right)
		^{-1}\begin{bmatrix}
			R^C\what L^B\\R^A\what L^B
		\end{bmatrix}$. 
		\STATE $\begin{bmatrix}
			\what R^B \\ \what  R^{\Phi} \\\what R^D
			\end{bmatrix}=\begin{bmatrix}
			R^B \\  R^{\Phi} \\R^D
		\end{bmatrix}(D+\alpha N)^{-1}$.
		\STATE $Y^D=\what R^BL^C\left(I-\begin{bmatrix}
			\what R^{\Phi} L^C & \what R^{\Phi} L^D \\
			\what R^D L^C & \what R^D L^D \\
		\end{bmatrix}\right)^{-1}$. 
		\STATE (LU f.) $L_\Upsilon R_\Upsilon=
		\frac{1}{\alpha+\beta}\left(I_p-Y^D\begin{bmatrix}
			I_q & \\ & 0
		\end{bmatrix}Y^A\right)$.
		\STATE
		$\what L^X= \left(\what L^B+\begin{bmatrix}
			\what L^{\Phi} & \what L^A
			\end{bmatrix} Y^A\right)R_\Upsilon^{-1}, L^X\leftarrow\begin{bmatrix}
			L^X & \what L^X
		\end{bmatrix}$.
		\STATE $\what R^X = L_\Upsilon^{-1}\left(\what R^B+Y^D\begin{bmatrix}
			\what R^{\Phi}\\\what R^D
		\end{bmatrix} \right), R^X \leftarrow\begin{bmatrix}
			R^X  \\ \what R^X 
		\end{bmatrix}$.
		\STATE 
		$\begin{bmatrix}
			L^B& L^{\Phi}
		\end{bmatrix}
		\leftarrow  \begin{bmatrix}
			L^B& L^{\Phi}
		\end{bmatrix}+M\what L^X L_\Upsilon^{-1}\begin{bmatrix}
			-I_p& Y^D
		\end{bmatrix}$.
		\STATE $
		\begin{bmatrix}
			R^B\\ R^{\Phi} 
			\end{bmatrix}\leftarrow \begin{bmatrix}
			R^B\\ R^{\Phi} 
			\end{bmatrix}+\begin{bmatrix}
			-I_p \\ Y^A
		\end{bmatrix}R_\Upsilon^{-1}\what R^XN
		$.
	\end{algorithmic}
\end{algorithm}


\section{Experiments and discussions}\label{sec:experiments-and-discussions}
\newcommand\plottime[4][1]{%
	\tikz[yscale=1.4,xscale=0.8*#1]{
		\fill[black]           (0,0) rectangle (#2, .1);
		\fill[black!30!white] (#2,0) rectangle (#3, .1);
		\fill[black!60!white] (#3,0) rectangle (#4, .1);
}}

We present numerical results from four examples to illustrate the behavior of \cref{alg:RADI:mare}.
All experiments were conducted in MATLAB 2023b under the Windows 11 Professional 64-bit operating system on a PC with an Intel Core i7-11370H processor at 3.30GHz and 32GB RAM. 

The four test problems are denoted as Rail, Lung2$-$, Rail-Nash, and Transport.
The stopping criteria are: $\nu_k < 10^{-12}$, or reaching $300$ iterations, or $\nu_k \ge 10^{12}$, or encountering NaN values during computation.

\Cref{tab:results-rail} collects the basic performance data, including iteration counts, the numbers of columns of $L^X$ (denoted by ``dim''), and the running time, of all $4\times 12$ experiments (4 examples, 12 shift strategies).
Note that the iteration counts ``ite'' is ``dim'' divided by $p$, so it is not listed.
Additional information is included  in the column ``remark'':
if the stopping criterion on $\nu$ is not met, $\nu_{300}$ or NaN is reported;
otherwise, a three-shade gray bar \plottime[.5]{1}{2}{3} indicates the timings for 1) generating shifts, 2) solving large linear equations in Lines~6, 8, 16, and~18,  and 3) others.

\Cref{fig:results} shows the 
convergence behavior in terms of both time (in seconds) vs.\ accuracy ($\nu$) and dim vs.\ accuracy.
For clarity, some curves are truncated to enhance the visibility of those convergent curves.

For each example, we test four types of shift strategies:
\begin{enumerate}
	\item ``leja $1$/$2$/$5$'': the Leja shifts with the prescribed $s=1,2,5$ respectively, computed once for multiple iterations (one shift per iteration, and recomputed when all shifts are used).
	\item ``leja c $1$/$2$/$5$'': the Leja shifts with the prescribed $s=1,2,5$ respectively, recomputed at every iteration.
	\item ``hami $1$/$2$/$5$'': the residual Hamiltonian shifts with the prescribed $s=1,2,5$ respectively, computed once for multiple iterations.
	\item ``hami c $1$/$2$/$5$'': the residual Hamiltonian shifts with the prescribed $s=1,2,5$ respectively, recomputed at every iteration. 
\end{enumerate}
Note that ``hami c *'' is one of the default choice 
of the package M-M.E.S.S.\ version~2.1 \cite{SaaKB21-mmess-2.1} for RADI applied to CAREs.

The first two examples are CAREs.
For the CARE $ C^{\T}C+A^{\T}X+XA-XBB^{\T}X=0 $, the substitution
\[
	D\leftarrow A, L^C\leftarrow B, R^B\leftarrow C
\]
and $A=D^{\T}, R^C=(L^C)^{\T}, L^B=-(R^B)^{\T}$ makes it an NARE in the form \cref{eq:mare}.
Hence we may use \cref{alg:RADI:mare} to solve CAREs.
However, since the involved matrix inverses are $(A+\beta I)^{-1}$ and $(D+\alpha I)^{-\T}=(A+\alpha I)^{-1}$,
it is reasonable to choose the shifts $\alpha=\beta$ in order to reduce the complexity to inverse the matrix (practically solve the linear equations).
Then \cref{alg:RADI:mare} coincides with the RADI method \cite{bennerBKS2018radi}, despite the shift-selection strategy.
Note that the Leja shifts are tested for the first time.
From the two following examples, the effect of Leja shifts can be observed.

\begin{example}[Lung2$-$]\label{eg:lung2-}
	The example is generated as follows: $-A$ is the matrix \texttt{lung2} in the SuiteSparse Matrix Collection \cite{davisH2011university} (formerly the University of Florida Sparse Matrix Collection), modelling temperature and water vapor transport in the human lung; $B,C$ are generated by MATLAB function \texttt{rand}.
	Here $m=n=109460,\nnz(A)=492564,p=10,q=5$ and $B,C$ are dense matrices.
	The matrix $A$ is nonsymmetric with its eigenvalues located in the left half-plane, hence stable. 

	The property of the matrix is not very good, leading to complicated convergence behavior:
	the residual may first grow and later drop to the desired accuracy;
	it may decrease almost to tolerance but suddenly rise;
	or the algorithm may fail to converge under some strategies.
In general, $s=1$ performs worse than $s=2$ or $s=5$, indicating that a very small projection subspace (of dimension $10$) cannot capture the global properties of $A$.
	The best strategy is ``leja 2'', namely $(s,s')=(2,2p)$.

Compared to results in \cite{bennerBKS2018radi} ($q=10$), where $\nu\le 10^{-11}$ was reached in 30 seconds with dim $310$ using ``hami 2'', our tests required about 56 seconds and dim $1820$, possibly due to the different randomly generated $B,C$.

\end{example}

\begin{example}[Rail]\label{eg:rail}
This example is a steel profile cooling model from the Oberwolfach Model Reduction Benchmark Collection, available at MORwiki \cite{morwiki_steel}.
	The data include $A\prec 0, E\succ 0, B,C$ with $m=n=79841,\nnz(A)=553921,p=6,q=7$.
	This is a generalized CARE, so we set $M=N=E$ instead of $I$ in \cref{alg:RADI:mare}. 
	Hence we use $M=N=E$ other than $I$ in \cref{alg:RADI:mare}.
	Since $A\prec 0$, the system is stable. 

	The property is good, and the algorithm converges well.
	The best strategy is ``leja c 1'', namely $(s,s')=(1,1)$.

	Notably, many plateaus appear in the processes ``leja/hami c *''.
	The reason is that the property is so good that every time we calculate the eigenvalues of the projected problem, the smallest several eigenvalues as the shift candidates are nearly the same, which makes the shifts not flexible and thus slowdowns the convergence.

\end{example}


\begin{example}[Rail-Nash]\label{eg:open-loop-nash-riccati-equation}
	The example models a two-player Nash game, leading to an open loop Nash Riccati equation \cite[Section~9.2]{aboukandilFIJ2003matrix}:
	\[
		\begin{bmatrix}
			A & \\ & A
		\end{bmatrix}^{\T}\begin{bmatrix}
			X_1\\ X_2
		\end{bmatrix}+\begin{bmatrix}
			X_1\\ X_2
		\end{bmatrix}A+\begin{bmatrix}
			C_1^{\T}C_1\\ C_2^{\T}C_2
		\end{bmatrix}-\begin{bmatrix}
			X_1\\ X_2
		\end{bmatrix}\begin{bmatrix}
		B_1B_1^{\T} & B_2B_2^{\T}
		\end{bmatrix}\begin{bmatrix}
			X_1\\ X_2
		\end{bmatrix}=0
	\]
	where $C_1,C_2$ are associated with two cost functions, and $B_1,B_2$ are associated with two players.
	To use familiar data, the underlying system is the same as Rail:
 $A,D$ match Rail, while $B_1,C_1$ match $B,C$ in Rail;
	$B_2$ shares the same scale and sparse pattern with $B$, generated by MATLAB function \texttt{sprand(B)*max(abs(B(:)))}, so does $C_2$.
	Here $m/2=n=79841,\nnz(A)/2=\nnz(D)=553921,p=12,q=14$.
	Also $A\succ0,D\succ0$, ensuring stability.

	Although $A\ne D$, it is still reasonable to set $\alpha=\beta$, since this reduces computational cost by reusing factorizations.
	However, to mimic the $A,D$ distinction, we solve the corresponding systems separately.

	The property is good, and the algorithm converges well.
	The best strategies are ``leja/hami c 1'', namely $(s,s')=(1,1)$.

\end{example}

\begin{example}[Transport]\label{eg:mare-transport-theory}
This example concerns the one-group neutron transport equation \cite{juangC1993iterative}:
	\[
		A=\frac{1}{\beta(1+\alpha)}\Omega^{-1}-\bs1_nq^{\T},
		D=\frac{1}{\beta(1-\alpha)}\Omega^{-1}-q\bs1_n^{\T},
		L^B=(R^B)^{\T}=\bs1_n,
		L^C=(R^C)^{\T}=q,
	\]
where $\Omega=\diag(\omega_1,\dots,\omega_n)$ with $1>\omega_1>\cdots>\omega_n>0$ generated by MATLAB function \texttt{rand}, 
	$q=\frac{1}{2}\Omega^{-1}c$,
	and $c$ is a weight vector with $c_i>0,\sum_{i=1}^nc_i=1$.
	The resulting NARE is an MARE, and also an example on the NARE/S, and $A',D'$ are diagonal.
	We use $n=20000$ with $p=q=1$.

	The property is quite good, and the algorithm converges rapidly.
	It can be seen that strategies ``leja/hami (c) 1'', namely $(s,s')=(1,1)$, all coincide, producing identical shifts and dimensions.
	Since the total execute time is too small, comparisons are not meaningful.

	In \cite{bennerKS2016lowrank}, the low-rank Newton-ADI method used at least 2.8 seconds to reach $\nu\le 10^{-9}$, significantly slower than our method.
They noticed the low-rank structure and used the Sherman-Morrison-Woodbury Formula, but did not establish our technique used for NARE/S that completely eliminates certain costly computations.
	As we can see, unlike the other three examples, the time for solving linear equations is relatively much less, benefiting from the fact that the matrix to be inverted in the NARE/S is diagonal in this example.
	We believe that this is the reason why their method is much slower except the difference of the methods. 

\end{example}

From the above experiments, we recommend the `leja c 1'' strategy (Leja shifts with $(s,s')=(1,1)$) when the property of the problem is good, and `leja 2'' (Leja shifts with $(s,s')=(2,2p)$) otherwise.

\begin{table}[htp]
	\centering
\caption{Performance}
	\label{tab:results-rail}
	\small
	\begin{tabular}{@{}c@{\,}*{1}{|@{\,}r@{\;}r@{\,}l@{\,}}}
		\hline
Lung2$-$ &\multicolumn{3}{c}{ite = dim / 10}
\\ \hline
		shift
		&  dim      & time  & remark 
		\\
		\hline
&&& \plottime[.0333]{0}{0}{75}$\,=\,$75s
\\\hline
 leja 1 & 2700 & 82.650 &\plottime[.0333]{0.787}{71.440}{82.650}\\
 leja 2 & 830 & 25.637 &\plottime[.0333]{0.268}{21.984}{25.637}\\
 leja 5 & 1650 & 51.110 &\plottime[.0333]{0.612}{44.207}{51.110}\\
 leja c 1 & 3020 & 97.126 &\quad1.001e+00
\\
 leja c 2 & 3010 & 118.176 &\quad9.944e-01
\\
 leja c 5 & 3010 & 156.807 &\quad9.992e-01
\\
 hami 1 & 1520 & 43.131 &\quad1.029e+13
\\
 hami 2 & 2880 & 89.606 &\plottime[.0333]{0.403}{77.771}{89.606}\\
 hami 5 & 3010 & 95.515 &\quad2.942e+11
\\
 hami c 1 & 2230 & 68.553 &\plottime[.0333]{4.108}{58.793}{68.553}\\
 hami c 2 & 1510 & 51.876 &\plottime[.0333]{6.681}{45.368}{51.876}\\
 hami c 5 & 1600 & 62.963 &\plottime[.0333]{16.261}{55.837}{62.963}\\
\hline
		Rail & \multicolumn{3}{c}{ite = dim / 6}
\\ \hline
		shift
		&  dim      & time  & remark 
		\\
		\hline
&&& \plottime[.1]{0}{0}{25}$\,=\,$25s
\\\hline
 leja 1 & 330 & 8.605 &\plottime[.1]{0.166}{7.137}{8.605}\\
 leja 2 & 336 & 8.531 &\plottime[.1]{0.159}{7.050}{8.531}\\
 leja 5 & 438 & 11.586 &\plottime[.1]{0.301}{9.662}{11.586}\\
 leja c 1 & 246 & 6.749 &\plottime[.1]{0.624}{5.659}{6.749}\\
 leja c 2 & 414 & 12.171 &\plottime[.1]{1.848}{10.368}{12.171}\\
 leja c 5 & 558 & 22.271 &\plottime[.1]{8.454}{19.875}{22.271}\\
 hami 1 & 402 & 10.556 &\plottime[.1]{0.198}{8.781}{10.556}\\
 hami 2 & 318 & 8.450 &\plottime[.1]{0.127}{7.000}{8.450}\\
 hami 5 & 576 & 14.615 &\plottime[.1]{0.298}{12.137}{14.615}\\
 hami c 1 & 1800 & 52.584 &\quad4.931e-09
\\
 hami c 2 & 1800 & 53.737 &\quad2.230e-07
\\
 hami c 5 & 1800 & 71.994 &\quad1.240e-06
\\
\hline
	\end{tabular}
	\hspace*{-2mm}
	\begin{tabular}{||@{}c@{\,}*{1}{|@{\,}r@{\;}r@{\,}l@{\,}}}
		\hline
Rail-Nash &\multicolumn{3}{c}{ite = dim / 12}
\\ \hline
		shift
		&  dim      & time  & remark 
		\\
		\hline
&&& \plottime[.0333]{0}{0}{75}$\,=\,$75s
\\\hline
 leja 1 & 876 & 40.322 &\plottime[.0333]{0.457}{32.403}{40.322}\\
 leja 2 & 876 & 40.662 &\plottime[.0333]{0.622}{32.640}{40.662}\\
 leja 5 & 1596 & 75.756 &\plottime[.0333]{1.517}{61.164}{75.756}\\
 leja c 1 & 504 & 25.744 &\plottime[.0333]{2.876}{21.132}{25.744}\\
 leja c 2 & 192 & 12.453 &\quad NaN
\\
 leja c 5 & 456 & 37.151 &\quad NaN
\\
 hami 1 & 888 & 40.622 &\plottime[.0333]{0.445}{32.586}{40.622}\\
 hami 2 & 840 & 38.971 &\plottime[.0333]{0.597}{31.324}{38.971}\\
 hami 5 & 1584 & 72.717 &\plottime[.0333]{0.912}{58.342}{72.717}\\
 hami c 1 & 504 & 25.670 &\plottime[.0333]{2.866}{21.119}{25.670}\\
 hami c 2 & 708 & 43.525 &\plottime[.0333]{11.175}{37.048}{43.525}\\
 hami c 5 & 828 & 67.490 &\plottime[.0333]{29.796}{59.971}{67.490}\\
\hline
Transport &\multicolumn{3}{c}{ite = dim}
\\ \hline
		shift
		&  dim      & time  & remark 
		\\
		\hline
&&& \plottime[10]{0}{0}{.25}$\,=\,$0.25s
\\\hline
 leja 1 & 39 & 0.133 &\plottime[10]{0.030}{0.055}{0.133}\\
 leja 2 & 40 & 0.134 &\plottime[10]{0.031}{0.056}{0.134}\\
 leja 5 & 35 & 0.125 &\plottime[10]{0.030}{0.052}{0.125}\\
 leja c 1 & 39 & 0.148 &\plottime[10]{0.035}{0.061}{0.148}\\
 leja c 2 & 49 & 0.207 &\plottime[10]{0.078}{0.109}{0.207}\\
 leja c 5 & 67 & 0.435 &\plottime[10]{0.256}{0.302}{0.435}\\
 hami 1 & 39 & 0.195 &\plottime[10]{0.062}{0.091}{0.195}\\
 hami 2 & 40 & 0.139 &\plottime[10]{0.034}{0.059}{0.139}\\
 hami 5 & 38 & 0.138 &\plottime[10]{0.031}{0.056}{0.138}\\
 hami c 1 & 39 & 0.137 &\plottime[10]{0.032}{0.056}{0.137}\\
 hami c 2 & 41 & 0.177 &\plottime[10]{0.069}{0.094}{0.177}\\
 hami c 5 & 53 & 0.338 &\plottime[10]{0.191}{0.226}{0.338}\\
\hline
	\end{tabular}
\end{table}
%
%
%
\begin{figure}[hp]
	\centering
	\subfloat[Lung2$-$]
	{\includegraphics[width=0.8\textwidth]{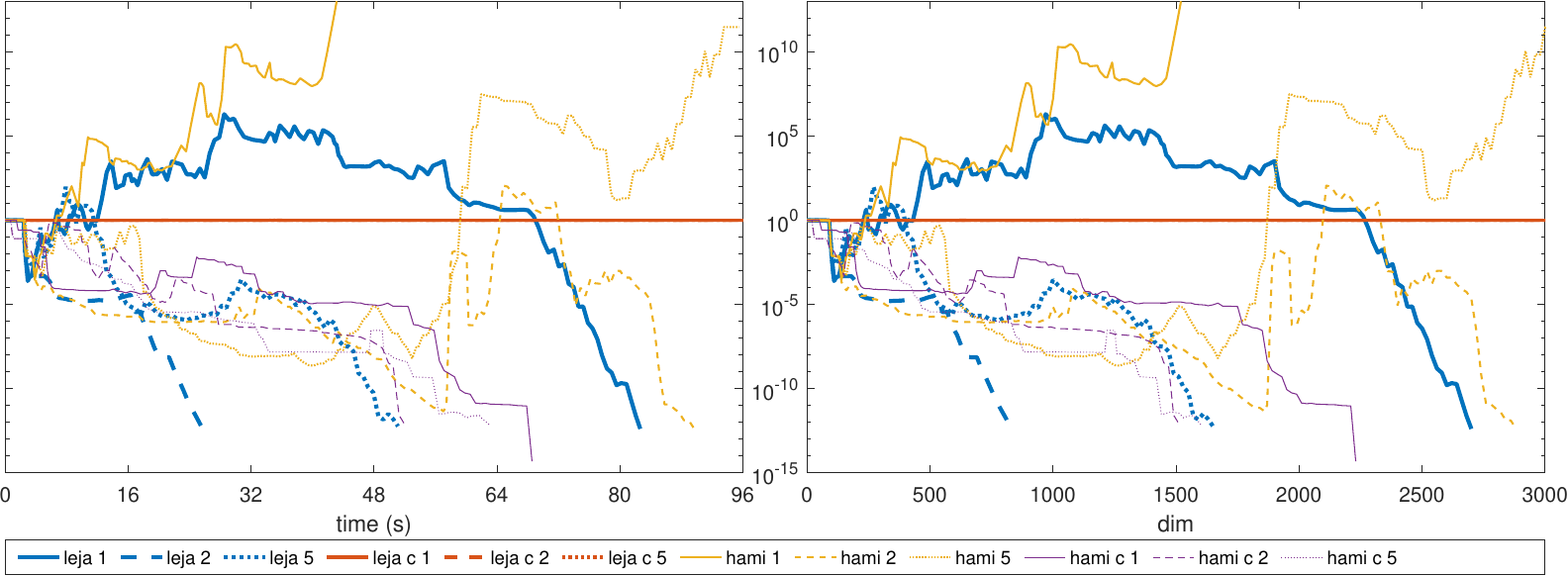}}

	\subfloat[Rail]
	{\includegraphics[width=0.8\textwidth]{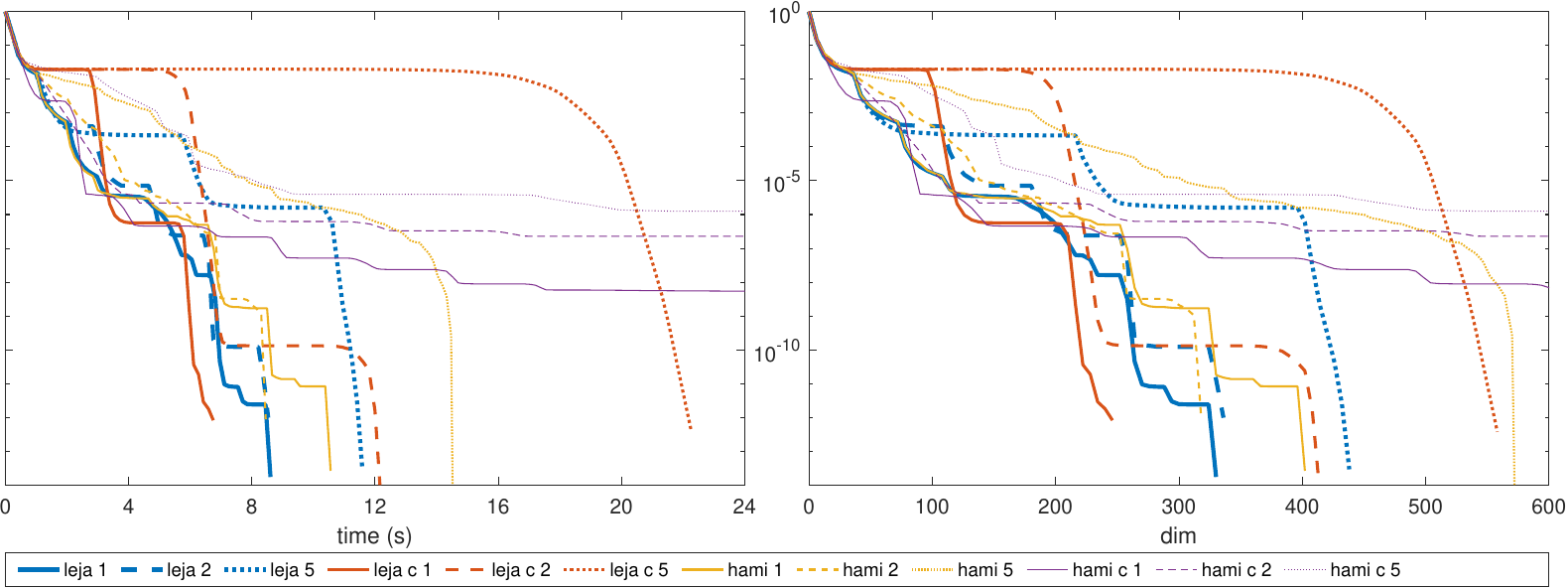}}

	\subfloat[Rail-Nash]
	{\includegraphics[width=0.8\textwidth]{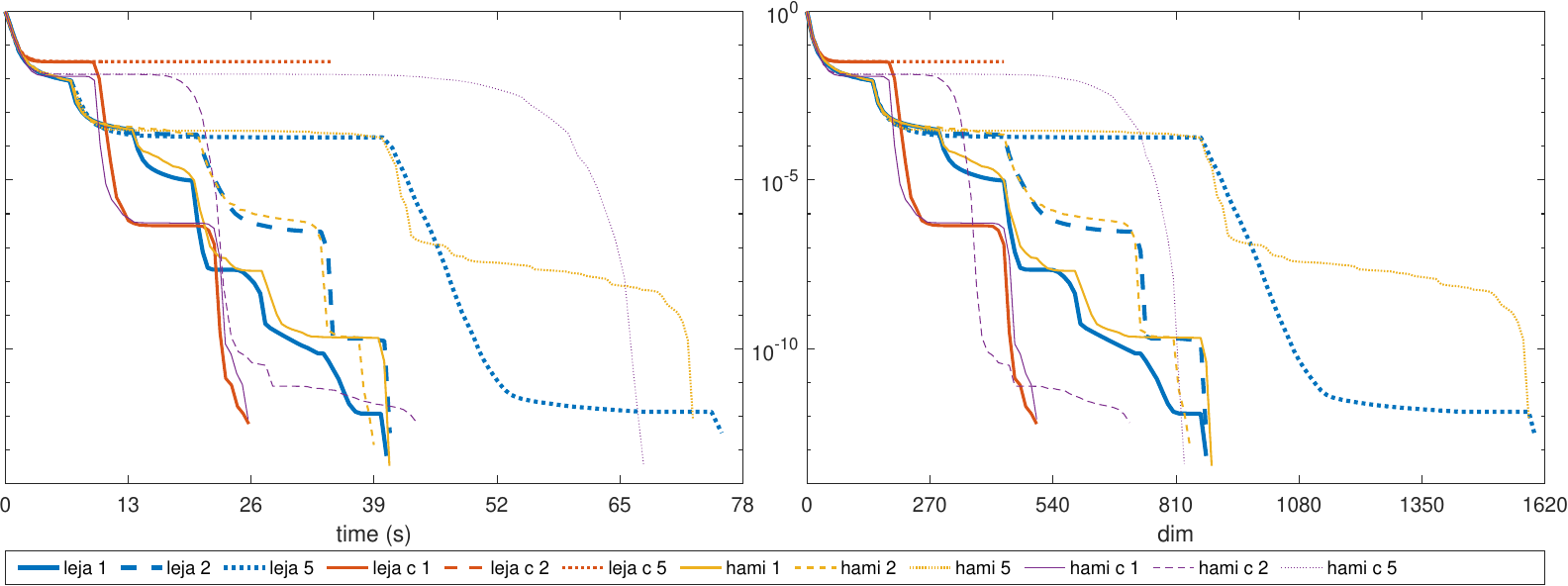}}

	\subfloat[Transport]
	{\includegraphics[width=0.8\textwidth]{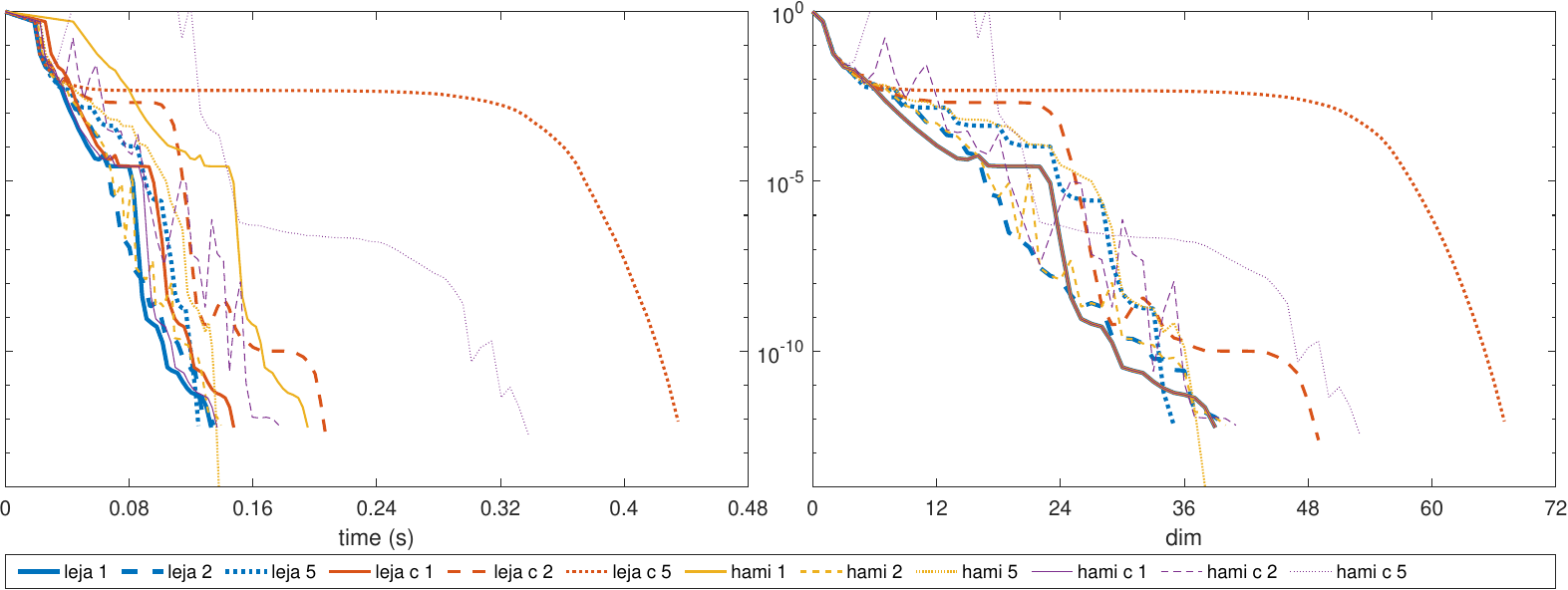}}
\caption{Convergence behavior}
	\label{fig:results}
\end{figure}

\section{Concluding remarks}\label{sec:concluding-remarks}
We have introduced a shift-involved fixed-point iteration and a flexible-shift variant for large-scale (nonsymmetric) algebraic Riccati equations.
This leads to a closed form of the stabilizing solution,
and an RADI-type method for computing the unique stabilizing solution, supported by a low-rank factorization of a structured residual.
Furthermore, we proposed a novel shift-selection strategy based on Leja points from rational approximation theory.

An interesting future application lies in the Bethe–Salpeter eigenvalue problem arising in electronic structure calculations, which can be reformulated as an NARE.
According to \cite{bennerDKK2017fast}, this falls into the special NARE/W class.
However, since the target is to compute eigenpairs rather than the stabilizing subspace, it remains unclear whether our method would outperform Lanczos methods modified accordingly that are commonly used.
We plan to investigate this direction in future work.



{\small
	\bibliographystyle{plain}
	\bibliography{../strings,../liang-11-18,../liang-tsinghua}
}

\appendix
\section{Tedious proofs}\label{sec:tedious-proofs}
\begin{proof}[{Proof of \cref{thm:mare:arb}}]
		First examine $X_1$. 
	\begin{align*}
		X_1&=\, H_0 + F_0 \Gamma^A\Gamma^D(I-G_0 \Gamma^A\Gamma^D)^{-1}E_0 
		\\ &\clue{\cref{eq:easy}}{=}\,
			H_0+ F_0
			\Gamma^A\left( I-\Gamma^DG_0\Gamma^A\right)^{-1}\Gamma^D
			E_0
		\\ &\clue{\cref{eq:H0:low-rank}}{=}\,
		\begin{bmatrix}
			U_1^A & F_0\Gamma^A
		\end{bmatrix}\begin{bmatrix}
		I-Y^DY^A & \\ &
				I-\Gamma^D U_1^D(I-Y^AY^D)^{-1}V_1^A\Gamma^A
				\end{bmatrix}^{-1}\begin{bmatrix}
			V^D_1 \\
			\Gamma^DE_0
		\end{bmatrix}
		\\ &\clue{\cref{eq:E0F0:matrix-multiply}}{=}\,
		\begin{multlined}[t]
			\begin{bmatrix}
				U_1^A & \wtd A\Gamma^A 
			\end{bmatrix}
			\begin{bmatrix}
				I& (I-Y^DY^A)^{-1}Y^DV_1^A\Gamma^A
				\\
				& I
			\end{bmatrix}
			\\
			\cdot\begin{bmatrix}
		I-Y^DY^A & \\ &
				I-\Gamma^D U_1^D(I-Y^AY^D)^{-1}V_1^A\Gamma^A
			\end{bmatrix}^{-1}
			\begin{bmatrix}
				I &\\
				\Gamma^DU_1^D(I-Y^AY^D)^{-1}Y^A & I 
			\end{bmatrix}
			\begin{bmatrix}
				V_1^D \\ \Gamma^D \wtd D
			\end{bmatrix}
		\end{multlined}
		\\ &=\,
		\begin{bmatrix}
			U_1^A & \wtd A\Gamma^A 
		\end{bmatrix}
		\begin{bmatrix}
			I-Y^DY^A & -Y^DV_1^A\Gamma^A \\ -\Gamma^DU_1^DY^A & 
			I-\Gamma^DU_1^DV_1^A\Gamma^A
		\end{bmatrix}^{-1}
		\begin{bmatrix}
			V_1^D \\ \Gamma^D\wtd D 
		\end{bmatrix}
		\\ &=\,
		\begin{bmatrix}
			U_1^A &  \wtd A\Gamma^A 
		\end{bmatrix}\left(
			I -\begin{bmatrix}
				T_1^D \\ \Gamma^D U_1^D 
				\end{bmatrix}\begin{bmatrix}
				T_1^A & V_1^A \Gamma^A 
			\end{bmatrix}\right)^{-1}\begin{bmatrix}
			V_1^D\\ \Gamma^D \wtd D 
		\end{bmatrix}
		.
	\end{align*}
	Then examine the recursion.
	\begin{align*}
		X_{t+1}&=\, H_0 + F_0 X_{t}(I-G_0 X_{t})^{-1}E_0 
		\\
		&\clue{\cref{eq:noniter:X:arb}}{=}\, 
		\begin{multlined}[t]
			H_0+ F_0
			\begin{bmatrix}
				U_t^A & \wtd A^t\Gamma^A 
			\end{bmatrix}
			\left(
				I-\begin{bmatrix}
					T_t^D \\ \Gamma^D U_t^D
					\end{bmatrix}\begin{bmatrix}
					T_t^A &  V_t^A  \Gamma^A
			\end{bmatrix}\right)^{-1}\begin{bmatrix}
				V_t^D\\ \Gamma^D \wtd D^t 
			\end{bmatrix}
			\\
			\cdot 
			\left(I-G_0
				\begin{bmatrix}
					U_t^A & \wtd A^t\Gamma^A 
				\end{bmatrix}
				\left(
					I-\begin{bmatrix}
						T_t^D \\ \Gamma^D U_t^D
						\end{bmatrix}\begin{bmatrix}
						T_t^A &  V_t^A  \Gamma^A
					\end{bmatrix}\right)^{-1}\begin{bmatrix}
					V_t^D\\ \Gamma^D \wtd D^t 
			\end{bmatrix}\right)^{-1}
			E_0
		\end{multlined}
		\\ &\clue{\cref{eq:easy}}{=}\,
		\begin{multlined}[t]
			H_0+ F_0
			\begin{bmatrix}
				U_t^A & \wtd A^t\Gamma^A 
			\end{bmatrix}
			\\
			\cdot \left(
				I-\begin{bmatrix}
					T_t^D \\ \Gamma^D U_t^D
					\end{bmatrix}\begin{bmatrix}
					T_t^A &  V_t^A  \Gamma^A
					\end{bmatrix} - \begin{bmatrix}
					V_t^D\\ \Gamma^D \wtd D^t 
				\end{bmatrix} G_0
				\begin{bmatrix}
					U_t^A & \wtd A^t\Gamma^A 
				\end{bmatrix}
			\right)^{-1}\begin{bmatrix}
				V_t^D\\ \Gamma^D \wtd D^t 
			\end{bmatrix}
			E_0
		\end{multlined}
		\\ &\clue{\cref{eq:H0:low-rank}}{=}\, 
		\begin{multlined}[t]
		\begin{bmatrix}
			U_1^A & F_0
			\begin{bmatrix}
				U_t^A & \wtd A^t\Gamma^A 
			\end{bmatrix}
		\end{bmatrix}\begin{bmatrix}
		I-Y^DY^A & \\ &
				I-\begin{bmatrix}
					T_t^D \\ \Gamma^D U_t^D
					\end{bmatrix}\begin{bmatrix}
					T_t^A &  V_t^A  \Gamma^A
					\end{bmatrix} - \begin{bmatrix}
					V_t^D\\ \Gamma^D \wtd D^t 
				\end{bmatrix} G_0
				\begin{bmatrix}
					U_t^A & \wtd A^t\Gamma^A 
				\end{bmatrix}
				\end{bmatrix}^{-1}
				\\
				\cdot \begin{bmatrix}
			V^D_1 \\\begin{bmatrix}
				V_t^D\\ \Gamma^D \wtd D^t 
			\end{bmatrix}
			E_0
		\end{bmatrix}
	\end{multlined}
		\\ &\clue{\cref{eq:E0F0:matrix-multiply}}{=} \,
		\begin{multlined}[t]
			\begin{bmatrix}
				U_1^A &	\wtd AU_t^A & \wtd A^{t+1}\Gamma^A 
			\end{bmatrix}
			\begin{bmatrix}
				I&(I-Y^DY^A)^{-1}Y^DV_1^AU_t^A & (I-Y^DY^A)^{-1}Y^DV_1^A\wtd A^t\Gamma^A
				\\
				&I &  
				\\
				&& I
			\end{bmatrix}
			\\
			\cdot\begin{bmatrix}
		I-Y^DY^A & \\ &
				I-\begin{bmatrix}
					T_t^D \\ \Gamma^D U_t^D
					\end{bmatrix}\begin{bmatrix}
					T_t^A &  V_t^A  \Gamma^A
					\end{bmatrix} - \begin{bmatrix}
					V_t^D\\ \Gamma^D \wtd D^t 
				\end{bmatrix} G_0
				\begin{bmatrix}
					U_t^A & \wtd A^t\Gamma^A 
				\end{bmatrix}
				\end{bmatrix}^{-1}
			\\
			\cdot
			\begin{bmatrix}
				I &&\\
				V_t^DU_1^D(I-Y^AY^D)^{-1}Y^A & I & 
				\\
				\Gamma^D\wtd D^tU_1^D(I-Y^AY^D)^{-1}Y^A &  & I 
			\end{bmatrix}
			\begin{bmatrix}
				V_1^D \\	V_t^D\wtd D \\ \Gamma^D \wtd D^{t+1} 
			\end{bmatrix}
		\end{multlined}
		\\ &=\,
		\begin{multlined}[t]
			\begin{bmatrix}
				U_{t+1}^A & \wtd A^{t+1}\Gamma^A 
			\end{bmatrix}
			\\\cdot
			\begin{bmatrix}
				I-Y^DY^A & -Y^DV_1^AU_t^A&-Y^DV_1^A\wtd A^t\Gamma^A
				\\
				-V_t^DU_1^DY^A & I-T_t^DT_t^A-V_t^DU_1^DV_1^AU_t^A & -T_t^DV_t^A\Gamma^A-V_t^DU_1^DV_1^A\wtd A^t\Gamma^A
				\\
				-\Gamma^D\wtd D^tU_1^DY^A & -\Gamma^DU_t^DT_t^A-\Gamma^D\wtd D^tU_1^DV_1^AU_t^A & I-\Gamma^DU_t^DV_t^A\Gamma^A-\Gamma^D\wtd D^tU_1^DV_1^A\wtd A^t\Gamma^A    
			\end{bmatrix}^{-1}
			\\
			\cdot \begin{bmatrix}
				V_{t+1}^D \\ \Gamma^D \wtd D^{t+1} 
			\end{bmatrix}
		\end{multlined}
		\\
		&=\, 
			\begin{bmatrix}
				U_{t+1}^A & \wtd A^{t+1}\Gamma^A 
			\end{bmatrix}
			\left(I- \begin{bmatrix}
					Y^D & 0\\
					V_t^DU_1^D & T_t^D
					\\
					\Gamma^D\wtd D^tU_1^D & \Gamma^DU_t^D 
			\end{bmatrix}
		\begin{bmatrix}
			Y^A & V_1^AU_t^A & V_1^A\wtd A^t\Gamma^A 
			\\
			0 & T_t^A & V_t^A\Gamma^A
		\end{bmatrix}\right)^{-1}
			 \begin{bmatrix}
				V_{t+1}^D \\ \Gamma^D \wtd D^{t+1} 
			\end{bmatrix}
		\\
		&=\,
		\begin{bmatrix}
			U_{t+1}^A  & \wtd A^{t+1} \Gamma^A
			\end{bmatrix}
			\left(I - \begin{bmatrix}
				T_{t+1}^D \\ \Gamma^D  U_{t+1}^D 
				\end{bmatrix}\begin{bmatrix}
				T_{t+1}^A &  V_{t+1}^A \Gamma^A   
			\end{bmatrix} \right)^{-1}\begin{bmatrix}
			V_{t+1}^D\\ \Gamma^D \wtd D^{t+1}
		\end{bmatrix}
		.
		\qedhere
	\end{align*}
\end{proof}

\end{document}